\theoremstyle{plain}
\newtheorem{thm}{Theorem}
\newtheorem{cor}[thm]{Corollary}
\newtheorem{prop}[thm]{Proposition}
\theoremstyle{definition}
\newtheorem{ex}[thm]{Example}
\theoremstyle{remark}
\newtheorem{rem}[thm]{Remark}
\numberwithin{equation}{section}
\title{WEIGHTED PROJECTIVE LINES AND RIEMANN SURFACES}
\author{Helmut Lenzing} 
\address{
\begin{flushleft}
        \hspace{0.3cm}  Institute of Mathematics \\
         \hspace{0.3cm}  University of Paderborn\\
         \hspace{0.3cm} Warburger Str.\ 100 \\
         \hspace{0.3cm}  33098 Paderborn,GERMANY\\
\end{flushleft}
}
\email{helmut@math.uni-paderborn.de}
\begin{document}


\begin{abstract}
For the base field of complex numbers we discuss the relationship between categories of coherent sheaves on compact Riemann surfaces and categories of coherent sheaves on weighted smooth projective curves. This is done by bringing back to life an old theorem of Bundgaard-Nielsen-Fox proving Fenchel's conjecture for fuchsian groups.
\end{abstract}

\keywords{weighted projective curve,  Riemann surface, category of coherent sheaves, uniformization, Fuchsian group}
\subjclass{18Fxx, 30F10 (primary), 30F35, 14H60 (secondary)}


\maketitle

\newcommand{\XX}{\mathbb{X}}
\newcommand{\YY}{\mathbb{Y}}
\newcommand{\Knull}[1]{\mathrm{K}_0(#1)}
\newcommand{\coh}[1]{\mathrm{coh}\,#1}
\newcommand{\cohnull}[1]{\mathrm{coh}_0#1}
\newcommand{\eqcoh}[2]{\mathrm{coh}_{#1}#2}
\newcommand{\Hom}[2]{\mathrm{Hom}(#1,#2)}
\newcommand{\Ext}[3]{\mathrm{Ext}^{#1}({#2},{#3})}
\newcommand{\euform}[2]{\langle #1,#2\rangle}
\newcommand{\aveuform}[2]{\langle\langle #1,#2\rangle\rangle}
\newcommand{\wt}[1]{\langle #1 \rangle}
\newcommand{\tw}[1]{[#1]}
\newcommand{\lcm}[1]{\mathrm{lcm}(#1)}
\newcommand{\vx}{\vec{x}}
\newcommand{\vy}{\vec{y}}
\newcommand{\vz}{\vec{z}}
\newcommand{\vc}{\vec{c}}
\newcommand{\vom}{\vec{\omega}}
\newcommand{\PSL}[2]{\mathrm{PSL}_{#1}(#2)}
\newcommand{\DD}{\mathbb{D}}
\newcommand{\LL}{\mathbb{L}}
\newcommand{\MM}{\mathbb{M}}
\newcommand{\PP}{\mathbb{P}}
\newcommand{\Oo}{\mathcal{O}}
\newcommand{\Ff}{\mathcal{F}}
\newcommand{\Kk}{\mathcal{K}}
\newcommand{\Hh}{\mathcal{H}}
\newcommand{\Kd}{\mathcal{K}}
\newcommand{\rk}[1]{\mathrm{rk}\,#1}
\newcommand{\dg}[1]{\mathrm{dg}\,#1}
\newcommand{\Aut}[1]{\mathrm{Aut}(#1)}
\newcommand{\NN}{\mathbb{N}}
\newcommand{\ZZ}{\mathbb{Z}}
\newcommand{\CC}{\mathbb{C}}
\newcommand{\HH}{\mathbb{H}}
\newcommand{\TT}{\mathbb{T}}
\newcommand{\eps}{\varepsilon}
\newcommand{\copr}{\sqcup}
\newcommand{\si}{\sigma}
\newcommand{\mmod}[1]{\mathrm{mod}(#1)}
\newcommand{\modgr}[2]{\mathrm{mod}^{#1}(#2)}
\newcommand{\modgrnull}[2]{\mathrm{mod}_0^{#1}(#2)}
\newcommand{\iso}{\cong}
\newcommand{\dual}[1]{D(#1)}
\newcommand{\al}{\alpha}
\newcommand{\be}{\beta}
\newcommand{\ga}{\gamma}
\newcommand{\la}{\lambda}
\newcommand{\wtilde}{\widetilde}


\section{Introduction}
Throughout we work over the base field $\CC$ of complex numbers, though quite a number of results hold in larger generality. The central theme of this paper is weighted smooth projective curves (in particular weighted projective lines) and their relationship to compact Riemann surfaces, expressed by the following theorem.

\begin{thm}[Bundgaard-Nielsen, Fox]\label{thm:main}
With the exception of the weighted projective lines $\PP^1\wt{p,q}$ with $p\neq q$, there exists for each weighted smooth projective curve $\XX$ a compact Riemann surface $M$ and a finite subgroup $G$ of its automorphism group $\Aut{M}$ such that the category $\coh{\XX}$ of coherent sheaves on $\XX$ is equivalent to the category $\eqcoh{G}{M}$ of $G$-equivariant coherent sheaves on $M$, that is, to  the skew group category $(\coh{M})[G]$.
\end{thm}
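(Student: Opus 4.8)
The plan is to realize $\XX$ as the orbifold quotient of an honest compact Riemann surface by a finite group, and then to translate that quotient into the asserted categorical equivalence. First I would set up the dictionary between weighted smooth projective curves and uniformization data. Writing $\XX$ as a smooth projective curve of genus $g$ carrying weights $p_1,\dots,p_n$ at distinct points, I attach to it the orbifold of signature $(g;p_1,\dots,p_n)$ together with its orbifold fundamental group $\Gamma$, a discrete cocompact group acting on the universal cover $\widetilde{X}\in\{\PP^1,\CC,\HH\}$, the three cases being governed by the sign of the orbifold Euler characteristic $2-2g-\sum_i(1-1/p_i)$. In each geometry one has $\XX\iso[\widetilde{X}/\Gamma]$ as a smooth orbifold, with the weights $p_i$ appearing as the orders of the cyclic (elliptic) stabilizers of the points lying over the weighted points.

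Next I would invoke the theorem of Bundgaard-Nielsen and Fox, i.e.\ Fenchel's conjecture, to produce a torsion-free normal subgroup $N\trianglelefteq\Gamma$ of finite index, and set $G=\Gamma/N$. Being torsion-free, $N$ acts freely and properly discontinuously on $\widetilde{X}$, so $M=\widetilde{X}/N$ is a compact Riemann surface --- of genus $\ge 2$ in the hyperbolic case, an elliptic curve in the Euclidean case, and $\PP^1$ (with $N$ trivial) in the spherical case --- and the finite group $G$ acts on $M$ by automorphisms with $[M/G]\iso[\widetilde{X}/\Gamma]\iso\XX$. Because $N$ is torsion-free, the $G$-stabilizer of a point of $M$ lying over the $i$-th weighted point is cyclic of order exactly $p_i$, so that the branch data of $M\to M/G$ match the weights of $\XX$ precisely.

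It then remains to pass from the geometric quotient to sheaf categories. The identification of coherent sheaves on a global quotient orbifold with equivariant sheaves gives $\coh{\XX}\iso\eqcoh{G}{M}$, and the standard description of $G$-equivariant objects as modules over the skew group construction yields $\eqcoh{G}{M}\iso(\coh{M})[G]$. Here the point to verify is that the local orbifold charts $[\DD/(\ZZ/p_i)]$ of $\XX$ correspond to the local pictures of $M\to M/G$ at the branch points, which is exactly what the stabilizer computation of the previous step secures.

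Finally I would check that the exclusion of $\PP^1\wt{p,q}$ with $p\neq q$ is genuinely forced: these are the bad orbifolds admitting no uniformization. Indeed, if a finite group $G$ acted on a compact Riemann surface $M$ with $[M/G]\iso\PP^1\wt{p,q}$, then Riemann-Hurwitz would give $2g_M-2=-|G|(1/p+1/q)$, whence $g_M=0$ and $|G|(1/p+1/q)=2$; but then the fibre over the first weighted point has $|G|/p=2q/(p+q)$ points, a non-integer once $p\neq q$, a contradiction (the same computation rules out a single weighted point). I expect the main obstacle to be the uniformization-theoretic and group-theoretic core --- securing the finite-index torsion-free normal subgroup uniformly across the hyperbolic, Euclidean and spherical cases, and identifying exactly where it breaks down. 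This is the content of Fenchel's conjecture that the paper revives; once $M$ and $G$ are available, the categorical translation is comparatively formal.
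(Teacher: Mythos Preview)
Your proposal is correct and follows essentially the same route as the paper: uniformization of $\XX$ by $\widetilde{X}\in\{\PP^1,\CC,\HH\}$ via the orbifold fundamental group $\Gamma$, invocation of Fenchel's conjecture (Bundgaard--Nielsen--Fox) to obtain a torsion-free normal subgroup $N\trianglelefteq\Gamma$ of finite index, formation of the compact Riemann surface $M=\widetilde{X}/N$ with its $G=\Gamma/N$-action, and finally the identification $\coh{\XX}\simeq\eqcoh{G}{M}=(\coh{M})[G]$. Your additional Riemann--Hurwitz verification that $\PP^1\wt{p,q}$ with $p\neq q$ genuinely cannot arise as such a quotient is a welcome complement, as the paper states this exclusion in its uniformization theorem without supplying the argument.
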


Within the community of representation theory of finite dimensional algebras this relationship has been touched on in only isolated instances \cite[Example 5.8]{Geigle:Lenzing:1987}, \cite{Kirillov:2006}, \cite{Chen:Chen:Zhou:2015}, \cite{Chen:Chen:2017}. In this community the above general result is therefore largely unknown, though in other branches of mathematics the statement, expressed in a different language, is well known as \emph{Fenchel's conjecture} or \emph{Fox's theorem} stating that each fuchsian group (in Nielsen's terminology $F$-group) has a torsionfree normal subgroup of finite index. The proof of the conjecture started 1948 by a paper of Nielsen~\cite{Nielsen:1948}, continued 1951 with a paper by Bundgaard and Nielsen~\cite{Bundgaard:Nielsen:1951} with the final touch due to R.\ H.\ Fox~\cite{Fox:1952} one year later. Notice here the corrections by Chau~\cite{Chau:1983}. This happened at a time when the concepts referred to in the above theorem did hardly exist. Nowadays, Fox's theorem is further known to be a special case of \emph{Selberg's lemma}, stating that in characteristic zero each finitely generated matrix group has a torsionfree normal subgroup  of finite index~\cite{Alperin:1987}.

Weighted projective curves, and weighted projective lines in particular, appear in many different incarnations making them the meeting point of a remarkable variety of different theories:
\begin{itemize}
\item Smooth projective curves (compact Riemann surfaces), equipped with a weight function,
\item Compact holomorphic one-dimensional orbifolds~\cite{Thurston:2002}, \cite{Scott:1983},
\item Deligne-Mumford curves (stacks) \cite{Behrend:Noohi:2006}, \cite{Abdelgadir:Ueda:2015},
\item Hereditary noetherian categories with Serre duality \cite{Lenzing;1997}, \cite{Reiten:VandenBergh:2002}, \cite{Lenzing:Reiten:2006},
\item Fuchsian groups  and related (spherical, parabolic resp.\ hyperbolic) tessellations~\cite{Zieschang:Vogt:Coldewey:1980}, \cite{Lyndon:Schupp:1977},
\item Smooth projective curves (or Riemann surfaces) with a parabolic structure \cite{Seshadri:1982}, \cite{Lenzing:1998}, \cite{Crawley-Boevey:2010}.
\end{itemize}
Weighted projective lines in particular, are related by tilting theory to the representation theory of finite dimensional algebras~\cite{Geigle:Lenzing:1987}, \cite{Ringel:1984}. The above theorem extends this relationship to compact Riemann surfaces. General references to the topics treated in this article are \cite{Jones:Singerman:1987} and \cite{Shokurov:1998}. Concerning weighted projective curves we refer to \cite{Geigle:Lenzing:1987} and \cite{Reiten:VandenBergh:2002}. Concerning discrete groups we refer to \cite{Lyndon:Schupp:1977} and \cite{Zieschang:Vogt:Coldewey:1980}.

\section{The category of coherent sheaves}
As mentioned in the introduction, weighted Riemann surfaces (or weighted smooth projective curves) appear in many different contexts  and incarnations (function theory, algebraic geometry, orbifolds, stacks, quotients of tessellations, etc.). The bridge between the various contexts is formed by their categories  of coherent sheaves. We therefore discuss below how to express major properties and invariants of weighted Riemann surfaces (or weighted smooth projective curves) in terms of their categories of coherent sheaves.

To begin with, it is classical that there is a bijection between (isomorphism classes of) compact Riemann surfaces $M$ and smooth projective curves $X$ such that the category $\coh{M}$ of holomorphic coherent sheaves on $M$ is equivalent to the category $\coh{X}$ of algebraic coherent sheaves on $X$. The most important
invariant of $M$ (or $X$) is the \emph{function field} $K=\CC(M)$ (resp.\ $K=\CC(X)$), which may be defined by the equivalence of quotient (resp.\ module) categories
\begin{equation} \label{eq:functionfield}
\frac{\coh{M}}{\cohnull{M}}\iso \mmod{K},\textrm{ respectively } \frac{\coh{X}}{\cohnull{X}}\iso \mmod{K}.
\end{equation}
Here, $\cohnull{}$ refers to the Serre subcategory of coherent sheaves of finite length, and $\mmod{K}$ denotes the category of finite dimensional $K$-vector spaces. It is again classical~\cite{Chevalley:1951} that $M$ (or $X$) is determined up to isomorphism by the function field $K$, which is an \emph{algebraic function field in one variable} over $\CC$, that is, a finite algebraic extension of the rational function field $\CC(x)$ in one variable. Moreover, each algebraic function field of one variable over $\CC$ has the form $\CC(M)$ (equivalently $\CC(X)$). We further note~\cite{Chevalley:1951} that in the situation of \eqref{eq:functionfield} each isomorphism from $\CC(M)$ to $\CC(X)$ induces a bijection between the point sets underlying $M$ and $X$ which may be used to identify these sets.

Next we turn to the weighted setting. A \emph{weighted compact Riemann surface $\MM$} (or \emph{weighted smooth projective curve $\XX$}) is a pair $(M,w)$, or $(X,w)$, where $w$ is a \emph{weight function} on $M$ (resp.\ $X$). This is an integral valued function taking only values $w(x)\geq1$ and such that $w(x)>1$ holds for only finitely many points $x_1,x_2,\ldots,x_t$, called the \emph{weighted} or \emph{exceptional} points. The remaining points of $M$ or $X$ are called \emph{ordinary}. Two weighted Riemann surfaces (resp.\ weighted curves) $(M_1,w_1)$ and $(M_2,w_2)$ are called isomorphic if there exists an isomorphism from $M_1$ to $M_2$, commuting with the respective weight functions. Similarly, the automorphism group $\Aut{\MM}$ of a weighted Riemann surface $\MM=(M,w)$ or its corresponding weighted smooth projective curve $(X,w)$, consists of all (holomorphic) automorphisms of $M$, respectively all (algebraic) automorphisms of $X$  that are commuting with the weight function $w$. Also the automorphism group of $\MM$ can be expressed in terms of the category $\coh{\MM}$ as the subgroup of $\Aut{\coh{\MM}}$, the group of isomorphism classes of self-equivalences of $\coh\MM$, fixing the structure sheaf $\Oo$, compare~\cite{Lenzing:Meltzer:2000}.

Another way to think of a weighted Riemann surface $(M,w)$ is to view it as a \emph{holomorphic orbifold}, compare \cite{Thurston:2002}, \cite{Scott:1983}. For this, one chooses for each weighted point of $M$, say of weight $a$, a small open neighborhood $U$,  that is isomorphic to the open unit disk $\DD$, and replaces $U$ by the cone $\DD/\mu_a$, where the group $\mu_a$ of $a$th roots of unity acts on $\DD$ by multiplication. Clearly, we can switch back and forth between the two concepts without information loss. To a weighted Riemann surface $(M,w)$, resp.\ a weighted smooth projective curve $(X,w)$, is associated a category of coherent sheaves, that can be described in different ways. The fastest access to these categories is provided by applying for each weighted point the $p$-cycle construction from \cite{Lenzing:1998}  to the category $\coh{M}$  of holomorphic coherent sheaves on $M$, respectively to the category $\coh{X}$ of algebraic coherent sheaves on $X$. We obtain this way equivalent abelian hereditary noetherian categories $\coh\MM$  and $\coh\XX$ which have Serre duality and inherit the function field from $M$ (or $X$), extending the validity of formula \eqref{eq:functionfield} to the weighted setting. Having \emph{Serre duality} means, that there is a self-equivalence $\tau$ of $\coh\MM$ (or $\coh\XX$) such that $\dual{\Ext{1}{E}{F}}=\Hom{F}{\tau E}$ holds bifunctorially for all coherent sheaves $E$ and $F$, where $D$ stands for the usual vector space duality. It follows that $\coh\MM$, or $\coh\XX$, has almost-split sequences with $\tau$ serving as the Auslander-Reiten translation. Keeping track of the function field and analyzing the resulting tube structure for of coherent sheaves of finite length it is easy to see that $(M,w)$, or $(X,w)$, can be recovered from the respective category of coherent sheaves, compare~\cite{Geigle:Lenzing:1991}.

All the foregoing also holds if we take another option for constructing $\coh{\MM}$ (or $\coh{\XX}$) by expressing the weighted structure by means of a sheaf of hereditary orders, following Reiten and Van den Bergh \cite{Reiten:VandenBergh:2002}.  Again, we can switch back and forth between weighted compact Riemann surfaces, weighted smooth projectives curves, and hereditary noetherian categories with Serre duality (with an infinite dimensional function field). Note that, also in the weighted case, the function field always determines the underlying Riemann surface or smooth projective curve but not position and weights of the weighted points. These, however, are determined by the category $\coh{\MM}$ (or $\coh{\XX}$). From now on we only speak of weighted compact Riemann surfaces, leaving it to the reader to formulate the corresponding statements for weighted smooth projective curves.

Let $\MM=(M,w)$ be a weighted compact Riemann surface and $\coh\MM$ its category of coherent sheaves. The most important invariant of $\MM$, or $\coh\MM$, next to the function field $\CC(M)$, is the \emph{orbifold Euler characteristic}, or just Euler characteristic $\chi_\MM$ of $\MM$.
More specifically let $\MM=\MM\wt{a_1,a_2,\ldots,a_t}$, where the weight sequence $a_1,a_2,\ldots,a_t$ lists the weights $w(x_i)$ of the weighted points. (Notice that the above notation does not specify the actual position of the weighted points which hence should be obtained from the context.) The \emph{Euler form} is the bilinear form on the Grothendieck group $\Knull{\MM}$ of the category $\coh{\MM}$ of coherent sheaves on $\MM$ which is given on (classes of) coherent sheaves by the expression $\euform{X}{Y}=\dim \Hom{X}{Y}-\dim \Ext{1}{X}{Y}$. Let $\bar{a}=\lcm{a_1,a_2,\ldots,a_t}$. Then the \emph{averaged Euler form} is defined as
$$
\aveuform{X}{Y}=\frac{1}{\bar{a}}\cdot\sum_{i=0}^{\bar{a}-1}\euform{\tau^i X}{Y},
$$
where $\tau$ is the Auslander-Reiten translation of $\coh\MM$. We have two linear forms \emph{rank} $\rk$ and \emph{degree} $\dg$ on $\Knull{\MM}$ which are uniquely determined by the following properties:
\begin{enumerate}
\item For $X$ in $\coh{\MM}$ we always have $\rk{X}\geq0$, and the structure sheaf $\Oo$ has rank one. Moreover, $\rk{X}=0$ holds exactly if $X$ has finite length. Also $\rk{}$ is preserved under automorphisms of $\coh{\MM}$.
\item A non-zero sheaf $X$ of finite length has degree $>0$, and the structure sheaf has degree zero. Further each simple sheaf $S_x$, concentrated in a point $x$ of $\MM$, has degree $\frac{\bar{a}}{w(x)}$.
\end{enumerate}

By means of the averaged Euler form we obtain the following weighted form of a \emph{Riemann-Roch theorem}.
\begin{equation}
\aveuform{X}{Y}=\aveuform{\Oo}{\Oo}\cdot\rk{X}\rk{Y}+\frac{1}{\bar{a}}\left|\begin{array}{cc}\rk{X}&\rk{Y}\\ \dg{X}&\dg{Y}\end{array}\right|
\end{equation}
The expression $2\aveuform{\Oo}{\Oo}$ is called the \emph{orbifold Euler characteristic} $\chi_\MM$, or just the Euler characteristic of $\MM$. It is an important homological invariant of $\MM$ and has the accessible combinatorial description
\begin{equation}
\chi_\XX=2\cdot\wt{\Oo,\Oo}-\sum_{i=1}^t\left(1-\frac{1}{a_i}\right),
\end{equation}
where $\chi_M=2\cdot\wt{\Oo,\Oo}=2(1-g_M)$ is the Euler characteristic of the underlying Riemann surface $M$ and where $g_M=\dim{\Ext{1}{\Oo}{\Oo}}$  is the \emph{genus} of $M$.

If $G$ is a finite group of automorphisms of $\MM=(M,w)$, then there are only finitely many orbits $Gx$ in $M/G$ having a non-trivial stabilizer $G_x$ (which is necessarily cyclic). Putting $\bar{w}(Gx)=w(x)\cdot|G_x|$ defines $\MM/G=(M/G,\bar{w})$ as a weighted Riemann surface, called the \emph{orbifold quotient}, or just quotient, of $\MM$ by $G$. A classical theorem of Riemann and Hurwitz asserts:
\begin{thm}[Riemann-Hurwitz]
Let $\MM$ be a weighted compact Riemann surface and $G$ a finite subgroup of the automorphism group $\Aut{\MM}$ of $\MM$. Then $\MM/G$ is again a weighted compact Riemann surface having Euler characteristic
$$
\chi_{\MM/G}=\frac{\chi_\MM}{|G|}.
$$\qed
\end{thm}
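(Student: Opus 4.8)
The plan is to reduce the statement to the combinatorial description of the orbifold Euler characteristic established above, writing it in the form $\chi_\MM = \chi_M - \sum_p \bigl(1 - \tfrac{1}{w(p)}\bigr)$, where $\chi_M = 2(1-g_M)$ is the topological Euler characteristic of the underlying surface and the (finite) sum runs over all points $p$ of $M$, the ordinary ones contributing zero. First I would assemble the classical facts about the quotient: a holomorphic action of the finite group $G$ on the compact Riemann surface $M$ makes the orbit space $N = M/G$ into a compact Riemann surface with $\pi \colon M \to N$ a branched Galois covering of degree $|G|$, which already shows that $\MM/G = (N,\bar w)$ is a genuine weighted compact Riemann surface. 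The two inputs I need from this covering are that the ramification index of $\pi$ at a point $p$ equals the order $|G_p|$ of its stabilizer, and that $G$ acts transitively on each fiber, so every fiber $\pi^{-1}(q)$ is a single orbit of $|G|/m_q$ points sharing a common stabilizer order $m_q = |G_p|$.

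Next I would run the classical Riemann--Hurwitz formula $\chi_M = |G|\,\chi_N - \sum_p \bigl(|G_p| - 1\bigr)$ for the underlying branched cover and organize the ramification sum by fibers: the fiber over $q$ contributes $\tfrac{|G|}{m_q}(m_q - 1) = |G|\bigl(1 - \tfrac{1}{m_q}\bigr)$, giving $\chi_M = |G|\bigl(\chi_N - \sum_q (1 - \tfrac{1}{m_q})\bigr)$. In parallel I would group the weight sum defining $\chi_\MM$ over the same fibers. Here the hypothesis that $G$ commutes with $w$ is used: $w$ is constant along each fiber, with common value $w_q$, so the fiber contributes $\tfrac{|G|}{m_q}\bigl(1 - \tfrac{1}{w_q}\bigr)$ to $\sum_p (1 - \tfrac{1}{w(p)})$.

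Dividing $\chi_\MM$ by $|G|$ and collecting the contribution attached to each $q$ leaves the term $\bigl(1 - \tfrac{1}{m_q}\bigr) + \tfrac{1}{m_q}\bigl(1 - \tfrac{1}{w_q}\bigr)$, which collapses to $1 - \tfrac{1}{m_q w_q}$; since $m_q w_q = w(p)\,|G_p| = \bar w(q)$ is precisely the quotient weight by definition, summing over $q$ yields $\chi_\MM/|G| = \chi_N - \sum_q \bigl(1 - \tfrac{1}{\bar w(q)}\bigr) = \chi_{\MM/G}$. The step I expect to be the real crux is the identification of the ramification index with the stabilizer order together with the transitivity of $G$ on fibers; once that classical geometric input is secured, the whole point of the defining formula $\bar w(Gx) = w(x)\,|G_x|$ is to absorb the ramification contribution $1 - \tfrac{1}{m_q}$ into the orbifold weight, after which multiplicativity of $\chi$ drops out of the elementary cancellation above. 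One could alternatively phrase the argument entirely inside $\coh\MM$ via the averaged Euler form, but I would prefer the geometric route since it keeps the role of the weight bookkeeping transparent.
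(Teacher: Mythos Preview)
Your argument is correct. The paper itself does not supply a proof of this theorem: it is stated as the classical Riemann--Hurwitz result and closed with a \qed, so there is nothing to compare your approach against. What you have written is exactly the standard derivation one would give here---reduce to the combinatorial formula for $\chi_\MM$, invoke the classical Riemann--Hurwitz identity $\chi_M = |G|\,\chi_N - \sum_p(|G_p|-1)$ for the underlying Galois cover, group both sums fiberwise, and observe that the definition $\bar w(Gx)=w(x)\,|G_x|$ is precisely what makes the telescoping identity $(1-\tfrac{1}{m_q})+\tfrac{1}{m_q}(1-\tfrac{1}{w_q})=1-\tfrac{1}{m_q w_q}$ reproduce $\chi_{\MM/G}$. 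Your identification of the crux (ramification index equals stabilizer order, and $G$ transitive on fibers) is apt; both are immediate from the local model $z\mapsto z^{|G_p|}$ of a finite cyclic action near a fixed point, which is the one piece of geometric input the computation rests on.
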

In the above setting, the action of $G$ on $\MM$ induces an action of $G$ on $\coh{\XX}$, allowing to form the skew group category $(\coh{\MM})[G]$, compare~\cite{Reiten:Riedtmann:1985}, also known as the category $\eqcoh{G}{\MM}$ of \emph{$G$-equivariant coherent sheaves} on $\MM$.
\begin{prop}\label{prop:equiv}
If $G$ is a finite group of automorphisms of $\MM$ then $\coh{\MM/G}=\eqcoh{G}{\MM}$. \qed
\end{prop}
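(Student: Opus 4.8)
The plan is to verify the claimed equivalence locally on the quotient $M/G$ and then to assemble the local pieces by invoking the reconstruction of a weighted curve from its category of coherent sheaves recalled above. First I would record that both sides are hereditary noetherian abelian categories with Serre duality: for $\coh{\MM/G}$ this is built into the construction of $\MM/G$, while for $\eqcoh{G}{\MM}=(\coh\MM)[G]$ it follows from the general principle that forming the skew group category with a finite group preserves hereditariness, noetherianness, and Serre duality (the Serre functor $\tau$ of $\coh\MM$ is $G$-equivariant and hence descends to the skew group category). By the reconstruction results recalled in Section~2, such a category is $\coh$ of a unique weighted compact Riemann surface, so it suffices to identify that surface with $\MM/G$, that is, to match the function field, the underlying curve, and the weights.

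For the function field I would pass to the quotient categories modulo their Serre subcategories of finite-length objects. At the generic point the $G$-action on $\mmod{\CC(M)}$ has skew group category governed by the fixed field $\CC(M)^G=\CC(M/G)$, so by \eqref{eq:functionfield} the localization of $\eqcoh{G}{\MM}$ recovers $\mmod{\CC(M/G)}$; thus the smooth curve underlying the reconstructed weighted surface is $M/G$, as required. It then remains to locate and measure the weighted points, which is an \'etale-local question on $M/G$. Over the complement of the images of the ramification points and of the original weighted points, $G$ acts freely on an ordinary region, and classical Galois descent identifies $G$-equivariant sheaves with sheaves on the quotient while introducing no weight, in agreement with $\bar{w}(Gx)=w(x)\cdot|G_x|=1$ there.

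The essential point, and the main obstacle, is the analysis at an orbit $Gx$ with nontrivial stabilizer $G_x$. Such a stabilizer acts faithfully on the one-dimensional tangent space at $x$ and is therefore cyclic, say of order $a=|G_x|$; in a suitable holomorphic chart $M$ looks near $x$ like the unit disk $\DD$ with $G_x=\mu_a$ acting by multiplication, and the branched cover $\DD\to\DD$, $z\mapsto z^a$, realizes the quotient. Writing $p=w(x)$ for the preexisting weight, I must establish the local identification
$$\eqcoh{\mu_a}{\DD\wt{p}}\iso\coh{\DD\wt{pa}},$$
which matches $\bar{w}(Gx)=pa$. In the hereditary-order (or $p$-cycle) description the local category is modules over a local hereditary order whose radical layering has length $p$; the group $\mu_a$ permutes the branches of the cover, and the corresponding skew group order is Morita equivalent to the local hereditary order of radical length $pa$. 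Equivalently, in the orbifold picture the cone point of order $a$ produced by $z\mapsto z^a$ composes with the weight $p$ to yield weight $pa$. Verifying that the skew group construction multiplies the local weight by the stabilizer order is where the real work lies; once it is in place, the \'etale-local descriptions glue via the reconstruction theorem to give $\coh{\MM/G}=\eqcoh{G}{\MM}$.
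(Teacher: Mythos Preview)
The paper does not actually prove this proposition: it is stated with a terminal \qed\ and taken as known, in the spirit of the skew group category formalism of Reiten--Riedtmann and the hereditary-order description of Reiten--Van den Bergh cited just before. So there is no argument in the paper to compare your proposal against.

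On its own merits your sketch is a reasonable and essentially correct strategy. The invariance of the hereditary, noetherian and Serre duality properties under forming $(\coh\MM)[G]$ is standard, and the fixed-field identification $\CC(M)^G=\CC(M/G)$ does pin down the underlying curve via \eqref{eq:functionfield}. Two points deserve a bit more care than you give them. First, to invoke the reconstruction of a weighted curve from a hereditary noetherian category with Serre duality you should check that $(\coh\MM)[G]$ is connected and that its quotient modulo finite-length objects is really $\mmod{\CC(M)^G}$; this requires identifying the finite-length objects in the skew group category, which is straightforward but should be said. Second, the local computation you isolate, that the skew group construction with a cyclic stabilizer of order $a$ multiplies the local weight $p$ by $a$, is indeed the crux; your description via hereditary orders (radical length $p$ going to $pa$) or, equivalently, via the $p$-cycle construction is the right mechanism, but as written it is an assertion rather than an argument. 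Once that local step is made precise, the global statement follows as you say.
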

We illustrate the concepts by examples. Here, and in the sequel, the symbols $C_n$, $D_n$, $A_n$ and $S_n$ refer, respectively, to the cyclic group of order $n$, the dihedral group of order $2n$, the alternating group of degree $n$, and the symmetric group of degree $n$.
\begin{ex}
By definition a \emph{polyhedral group} $G$ is a finite subgroup of $\Aut{\PP^1}=\PSL{2}{\CC}$. A polyhedral group is either a cyclic group $C_n$ of order $n$, a dihedral group $D_n$ of order $2n$, a tetrahedral group $A_4$ of order $12$, an octahedral group $S_4$ of order $24$ or the icosahedral group $A_5$ of order $60$. The corresponding quotient $\PP^1/G$ is respectively the weighted projective line $\PP^1\wt{n,n}$, $\PP^1\wt{2,2,n}$, $\PP^1\wt{2,3,3}$, $\PP^1\wt{2,3,4}$ and $\PP^1\wt{2,3,5}$. In accordance with the Riemann-Hurwitz rule the corresponding Euler characteristics are respectively $2/n$, $1/n$, $1/6$, $1/12$ and $1/30$, respectively. For the above examples, where $\PP^1\wt{a,b,c}=\PP^1/G$ we therefore note the relationship
\begin{equation}
|G|=\frac{2}{1/a+1/b
+1/c-1}.
\end{equation}
\end{ex}
Another useful tool is the \emph{dominance graph} which is a directed graph whose vertices are (isomorphism classes of) weighted Riemann surfaces and where we draw an arrow $\xymatrix{\MM_1\ar[r]|-{G}&\MM_2}$ if there exists a finite subgroup $G$ of $\Aut{\MM_1}$ with $\MM_2\iso \MM_1/G$, where the label of the arrow should be interpreted as the isomorphism type of $G$.
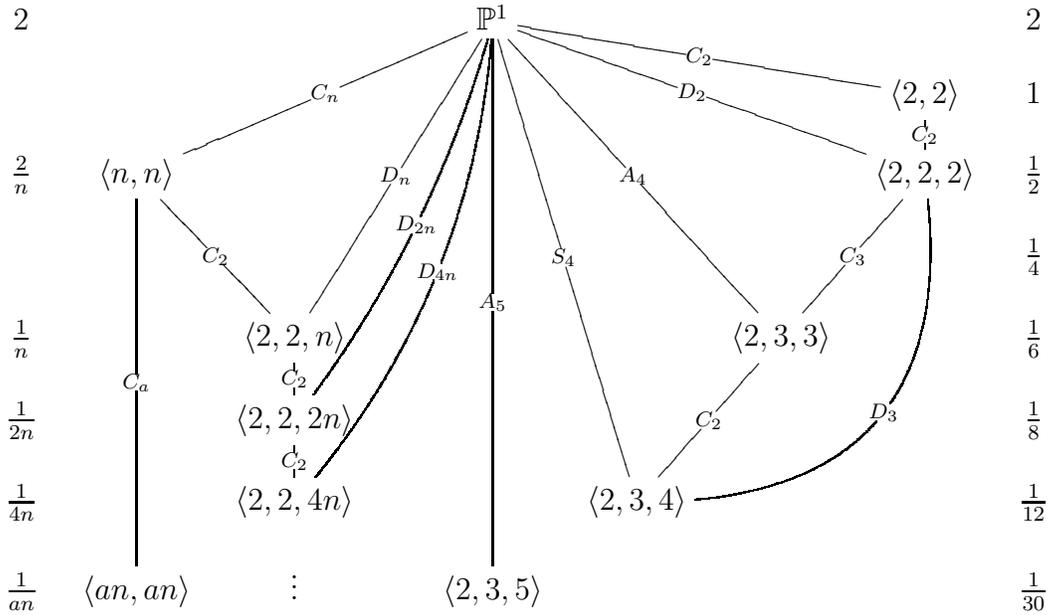
\begin{figure}[ht!]
  \centering
$$
\xymatrix@R10pt@C10pt{
2&          &      &&\PP^1\ar@{-}[drrr]|-{C_2}&& &                &2 \\
&          &      && &&       &\wt{2,2}\ar@{-}[d]|-{C_2}&1\\
\frac{2}{n}&\wt{n,n}\ar@{-}[rrruu]|-{C_n}&&&&&&\wt{2,2,2}\ar@/^4pc/@{-}[ddddll]|-{D_3}\ar@{-}[llluu]|-{D_2}\ar@{-}[ddl]|-{C_3}&\frac{1}{2}\\
&&         &      &                 &                &                  &&\frac{1}{4}\\
\frac{1}{n}&&\wt{2,2,n}\ar@{-}[luu]|-{C_2}\ar@{-}[uuuurr]|-{D_n}&&&  & \wt{2,3,3}\ar@{-}[lluuuu]|-{A_4}&                  &\frac{1}{6}\\
\frac{1}{2n}&   &\wt{2,2,2n}\ar@{-}[u]|-{C_2}\ar@/_0.7pc/@{-}[rruuuuu]|-{D_{2n}}&&      & & &         &\frac{1}{8}\\
\frac{1}{4n}&   &\wt{2,2,4n}\ar@{-}[u]|-{C_2}\ar@/_1.4pc/@{-}[rruuuuuu]|-{D_{4n}}&&&\wt{2,3,4}\ar@{-}[uuuuuul]|-{S_4}\ar@{-}[uur]|-{C_2}&&        &\frac{1}{12}\\
\frac{1}{an}&\wt{an,an}\ar@{-}[uuuuu]|-{C_a}&\vdots&& \wt{2,3,5}\ar@{-}[uuuuuuu]|-{A_5}& &                &                  &\frac{1}{30}}
$$
\caption{Dominance graph for positive Euler characteristic}
\label{fig:DominanceGraph}
\end{figure}
Figure~\ref{fig:DominanceGraph} shows the dominance graph (arrows top--down) for positive Euler characteristic, where a weight symbol $\wt{a,b,c}$ stands for the (isomorphism class of the) weighted projective line $\PP^1\wt{a,b,c}$. We have not included the weights $\wt{p,q}$ with $p\neq q$. Notice that we have added two scales for the Euler characteristic for members from the left part, where we assume $n\geq3$, (resp.\ from the right) part of the figure. We further note that $\PP^1\wt{2,3,4}$ and $\PP^1\wt{2,3,5}$ have trivial automorphism group, yielding terminal members $\wt{2,3,4}$ and $\wt{2,3,5}$ of the dominance graph. Also, it is immediate from the Riemann-Hurwitz theorem that for non-zero Euler characteristic there are no loops in the dominance graph. For Euler characteristic zero there are.

For negative Euler characteristic only few compact Riemann surfaces have a nontrivial automorphism group. Moreover the automorphism group is always finite:
\begin{prop}[Hurwitz]
Let $M$ be a compact Riemann surface of negative Euler characteristic. Then the automorphism group $\Aut{M}$ is finite, and its order is bounded by  $42\cdot|\chi_M|$.\qed
\end{prop}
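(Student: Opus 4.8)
The plan is to first establish that $\Aut{M}$ is finite and then to read off the numerical bound directly from the Riemann-Hurwitz theorem already available to us.

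\emph{Finiteness.} Since $\chi_M<0$, the tangent sheaf $\mathcal{T}_M$ has degree $\deg\mathcal{T}_M=-\deg\Omega_M=\chi_M<0$, so it admits no nonzero global section; equivalently $M$ carries no nonzero holomorphic vector field. As $\Aut{M}$ is an algebraic group of finite type whose Lie algebra is precisely the space $H^0(M,\mathcal{T}_M)$ of such vector fields, it is zero-dimensional and therefore finite. (In the uniformization picture underlying the rest of the paper one argues instead that $M=\HH/\Gamma$ for a cocompact torsionfree Fuchsian group $\Gamma$, whence $\Aut{M}=N(\Gamma)/\Gamma$ with $N(\Gamma)$ the normalizer of $\Gamma$ in $\PSL{2}{\mathbb{R}}$; this normalizer is again Fuchsian and contains $\Gamma$ with finite index because both groups have finite covolume, so the quotient is finite.)

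\emph{Reduction.} Now set $G=\Aut{M}$, $N=|G|$, and regard $M$ as the weighted surface $\MM$ with trivial weight function. Applying the Riemann-Hurwitz theorem to the orbifold quotient gives $\chi_{\MM/G}=\chi_M/N$. Writing $\MM/G$ over a base surface of genus $h\geq0$ with cyclic stabilizers of orders $a_1,\ldots,a_t\geq2$, the combinatorial formula for the Euler characteristic reads
$$\chi_{\MM/G}=2(1-h)-\sum_{i=1}^{t}\left(1-\frac{1}{a_i}\right).$$
Because $\chi_M<0$ the left-hand side is a strictly negative rational number.

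\emph{The extremal constant.} The heart of the matter is the elementary claim that such a negative value can be no closer to zero than $-\tfrac{1}{42}$, i.e. $|\chi_{\MM/G}|\geq\tfrac{1}{42}$. This is a finite optimization over the discrete data $(h;a_1,\ldots,a_t)$. If $h\geq2$ then $|\chi_{\MM/G}|\geq2$; if $h=1$ negativity forces $t\geq1$ and $|\chi_{\MM/G}|\geq 1-\tfrac{1}{a_1}\geq\tfrac12$. If $h=0$ negativity forces $\sum_i(1-1/a_i)>2$, hence $t\geq3$; for $t=3$ one has $|\chi_{\MM/G}|=1-\sum_i 1/a_i$, and the supremum of $\sum_i 1/a_i$ among admissible triples with $\sum_i 1/a_i<1$ equals $\tfrac{41}{42}$, attained at $(a_1,a_2,a_3)=(2,3,7)$ via $\tfrac12+\tfrac13+\tfrac17=\tfrac{41}{42}$, whence $|\chi_{\MM/G}|=\tfrac{1}{42}$; every other triple and every configuration with $t\geq4$ stays at least $\tfrac{1}{42}$ away from zero. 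Combining, $|\chi_M|=N\cdot|\chi_{\MM/G}|\geq N/42$, which is exactly $N\leq42\cdot|\chi_M|$.

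I expect the main obstacle to be this last case analysis pinning down $(2,3,7)$ as the unique minimizer of $|\chi_{\MM/G}|$: one must check the competing triples $(2,3,a)$ for $a\geq7$, $(2,4,a)$, $(2,5,a)$, $(3,3,a)$, $(3,4,a)$, and so on, together with all $t\geq4$ families such as $(2,2,2,a)$, and verify that none produces a positive value of $\sum_i(1-1/a_i)-2$ below $\tfrac{1}{42}$. The finiteness step, by contrast, is essentially formal once one records that a curve of negative Euler characteristic has no infinitesimal automorphisms.
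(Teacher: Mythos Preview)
The paper does not actually give a proof of this proposition: it is stated with a terminal \texttt{\textbackslash qed} and no argument, as a classical result attributed to Hurwitz. So there is nothing in the paper to compare your proof against line by line.

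That said, your argument is correct and is precisely the standard proof of Hurwitz's bound. It also fits well with the surrounding material: you invoke exactly the Riemann--Hurwitz theorem and the combinatorial formula for the orbifold Euler characteristic that the paper has just set up, and the extremal role of the weight type $\langle 2,3,7\rangle$ with $|\chi|=1/42$ is exactly what the paper exploits immediately afterwards when discussing Hurwitz surfaces and Klein's quartic. Your finiteness argument via $H^0(M,\mathcal{T}_M)=0$ is clean; the alternative uniformization argument you sketch is equally acceptable and matches the paper's later use of $\wtilde{\MM}=\HH$ in the hyperbolic case. The case analysis you flag as the ``main obstacle'' is genuinely elementary once one observes that for $h=0$, $t\geq4$ the minimum of $\sum(1-1/a_i)-2>0$ is $1/6$ at $(2,2,2,3)$, so only the triangle case $t=3$ needs the short enumeration you describe.
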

Usually, the Hurwitz bound $84(g-1)$ is expressed in terms of the genus $g_M$ of $M$, where $g_M$ is related to the Euler characteristic for the identity $\chi_M=2(1-g_M)$. By definition, the Hurwitz bound is attained by the socalled \emph{Hurwitz surfaces} (or \emph{Hurwitz curves} as they are called in the context of smooth projective curves). The Hurwitz curve of smallest genus ($g=3, \chi=-4$) is Felix Klein's quartic $\Kk_4$ defined by the homogeneous polynomial $x^3y+y^3z+z^3x$. Its automorphism group is the simple group $G_{168}=\PSL{2}{7}$ of order $168$, \cite{Levy:1999}. Moreover, the quotient $\Kk_4/G_{168}$ is the weighted projective line $\XX=\PP^1\wt{2,3,7}$~\cite{Adler:1999} whose minimal value for the hyperbolic area $|\chi_\XX|=\frac{1}{42}$  is responsible for the Hurwitz bound. For further information on Hurwitz curves or surfaces see \cite{Kulkarni:1997}.

\section{Orbifold fundamental group and proof of Theorem~\ref{thm:main}}
To define the fundamental group of a weighted compact Riemann surface $\MM=(M,w)$ with weighted points $x_1,x_2,\ldots,x_t$ of weight $a_1,a_2,\ldots,a_t$, we need a modified version of homotopy for paths and loops which takes care of the weight function. For this we allow, relative to a fixed ordinary base point,  only paths possibly ending in a weighted point but not passing through any weighted point. We further select for each weighted point $x_i$ a loop $\si_i$ having winding index 1 with respect to $x_i$ and winding index 0 with respect to each other weighted point. Let $H$ be the monoid of homotopy classes of such paths. We next enlarge the homotopy relation to the smallest congruence relation on $H$ including the relations $\si_1^{a_1}=\si_2^{a_2}=\cdots=\si_t^{a_t}=1$. Restricting to the homotopy classes of loops, we obtain the \emph{orbifold fundamental group} $\pi_1^{orb}(\MM)$ of the weighted Riemann surface $\MM$.
\begin{prop}\label{prop:FundamentalGroup}
Let $\MM=\MM\wt{a_1,a_2,\ldots,a_t}$ be a weighted Riemann surface and $g$ the genus of the underlying Riemann surface. Then the orbifold fundamental group is the group on generators $$\al_1,\al_2,\ldots,\al_g,\be_1,\be_2,\ldots,\be_g,\si_1,\si_2,\ldots,\si_t$$ subject to the relations
$$
\si_1^{a_1}=\si_2^{a_2}=\cdots =\si_t^{a_t}=1=\si_1\si_2\cdots\si_t\,[\al_1,\be_1][\al_2,\be_2]\cdots[\al_g,\be_g],
$$
where $[a,b]$ denotes the commutator $aba^{-1}b^{-1}$ of $a$ and $b$.
\end{prop}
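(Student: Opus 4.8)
The plan is to reduce the assertion to the classical fundamental group of the underlying surface with its weighted points removed, and then to impose the cone relations. Write $M_0 = M \setminus \{x_1, x_2, \ldots, x_t\}$ for the punctured surface. The first step is to recognise that, since the base point is ordinary and admissible paths never meet a weighted point in their interior, the loops entering the monoid $H$ are precisely the loops of $M_0$, and two of them are homotopic in the sense used to define $H$ exactly when they are homotopic rel base point in $M_0$. Hence, before the torsion relations are imposed, the group of loop-classes in $H$ is canonically $\pi_1(M_0)$, and forming the smallest congruence containing $\si_i^{a_i}=1$ identifies $\pi_1^{orb}(\MM)$ with the quotient of $\pi_1(M_0)$ by the normal closure of $\si_1^{a_1}, \ldots, \si_t^{a_t}$. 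I would present the same identification more robustly through an orbifold version of the Seifert--van Kampen theorem: covering $\MM$ by $M_0$ and the cone neighbourhoods $\DD/\mu_{a_i}$, each of which has orbifold fundamental group $C_{a_i}$, the amalgamation of $\pi_1(M_0)$ with the $C_{a_i}$ over the boundary loops $\langle \si_i \rangle$ forces precisely the relations $\si_i^{a_i}=1$ and nothing more.

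The second step is the textbook computation of $\pi_1(M_0)$. Beginning from the standard $4g$-gon model of the closed surface $M$, whose edge identifications give the one-relator presentation with generators $\al_1, \ldots, \al_g, \be_1, \ldots, \be_g$ and relation $[\al_1,\be_1]\cdots[\al_g,\be_g]=1$, I would delete the $t$ points and adjoin for each a small loop $\si_j$ with the winding indices prescribed in the definition. A deformation retraction of the punctured polygon onto its one-skeleton (equivalently, van Kampen applied to the polygon with $t$ open cone-discs removed) then yields
$$
\pi_1(M_0) = \langle \al_1, \ldots, \al_g, \be_1, \ldots, \be_g, \si_1, \ldots, \si_t \mid \si_1 \si_2 \cdots \si_t\,[\al_1,\be_1]\cdots[\al_g,\be_g]=1\rangle .
$$
Here the single relation records that the product of the boundary loops around the punctures coincides with the boundary word of the polygon. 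For $t\geq 1$ this relation merely eliminates one generator, so $\pi_1(M_0)$ is in fact free of rank $2g+t-1$; but it is the symmetric presentation that we need. Adjoining the relations $\si_i^{a_i}=1$ to the presentation just obtained then gives exactly the presentation of $\pi_1^{orb}(\MM)$ claimed in the proposition.

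The main obstacle I anticipate lies not in the (standard) computation of $\pi_1(M_0)$ but in justifying the first step rigorously, that is, in showing that the combinatorial monoid $H$ of admissible paths together with the enlarged congruence reproduces $\pi_1(M_0)$ modulo the torsion relations and introduces no further identifications. The delicate point is that admissible paths may terminate at weighted points, so one must verify that allowing such endpoints, and homotopies through them, neither contracts the loops $\si_i$ (which would happen if one worked with free homotopy in $M$ rather than rel endpoints in $M_0$) nor creates relations beyond $\si_i^{a_i}=1$. Phrasing the argument through the orbifold Seifert--van Kampen theorem, where the local model $\pi_1^{orb}(\DD/\mu_{a_i})=C_{a_i}$ carries all the content, is the cleanest way to dispose of this concern.
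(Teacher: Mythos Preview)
Your argument is correct and in fact more careful than the paper's own proof, which occupies two sentences: it cites the classical presentation of $\pi_1(M)$ for the closed surface and then asserts that ``by definition of orbifold homotopy'' the weighted points contribute the remaining generators and relations. Your route through the punctured surface $M_0$ is the standard way to make that assertion precise: the generators $\si_i$ and the long product relation arise naturally only after the points are removed, and the torsion relations $\si_i^{a_i}=1$ are then exactly the cone relations imposed by the definition. The orbifold Seifert--van~Kampen framing you offer is a genuine improvement in rigour over the paper's treatment, addressing precisely the concern you flag about paths terminating at weighted points; the paper simply does not engage with that issue. In substance, however, both arguments rest on the same classical input (the presentation of the fundamental group of a surface, punctured or not) and the same mechanism (adjoining the relations $\si_i^{a_i}=1$), so this is a more detailed execution of the same approach rather than a different one.
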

\begin{proof}
  It is classical that the fundamental group of a compact Riemann surface of genus $g$ is the group on generators $\al_1,\al_2,\ldots,\al_g,\be_1,\be_2,\ldots,\be_g$ subject to the relations $$[\al_1,\be_1][\al_2,\be_2]\cdots[\al_g,\be_g]=1.$$
  By definition of orbifold homotopy, the $t$ weighted points with weights $a_1,a_2,\ldots,a_t$ then yield the relations from the proposition.
\end{proof}

In the usual fashion, the space of all orbifold homotopy classes of paths in $\MM$ (starting at the base point and allowing weighted points only as end points), leads to the covering space $\wtilde{\MM}$, equipped with a natural topology and a projection map $\pi:\wtilde{\MM}\to \MM$ defining $\wtilde{\MM}$ as a branched cover of $\MM$ (with ramification over the weighted points). As usual $\wtilde{\MM}$ is simply connected and inherits from $\MM$ via $\pi:\wtilde{\MM}\to \MM$ a holomorphic structure. By construction, $\wtilde{\MM}$ is thus an ordinary (not necessarily compact) Riemann surface, and hence by the general Riemann mapping theorem, that is, the uniformization theorem of Poincar{\'e} and Koebe~\cite{De:Saint-Gervais:2016}, holomorphically isomorphic to either $\PP^1$, $\CC$ or $\HH$, where $\HH$ denotes the open upper complex half-plane.

\begin{thm} \label{thm:Uniformization}
Let $\MM$ be a weighted Riemann surface. Then the fundamental group $\pi_1^{orb}(\MM)$ acts on the universal orbifold cover $\wtilde{\MM}$ as group of deck transformations (the members of $\Aut\MM$ commuting with $\pi:\wtilde\MM\to \MM$).  This actions represents $\MM$ as orbifold quotient
  $$\MM={\wtilde{\MM}}/{\pi_1^{orb}(\MM)}.
  $$
Assuming, moreover, that $\MM$ is not isomorphic to any $\PP^1\wt{a_1,a_2}$ with $a_1\neq a_2$, we have the following trisection:
\begin{enumerate}
  \item \emph{spherical}: If $\chi_\MM>0$ then $\wtilde{\MM}=\PP^1$, and $\pi_1^{orb}(\MM)$ is a finite polyhedral group and $\MM$ is one of $\PP^1\wt{n,n}$, $\PP^1\wt{2,2,n}$, or $\PP^1\wt{2,3,a}$ with $a=2,3,5$.
  \item \emph{parabolic}: If $\chi_\MM=0$ then $\wtilde{\MM}=\CC$, and $\MM$ is either a smooth elliptic curve or a weighted projective line of tubular type $\wt{2,3,6}$, $\wt{2,4,4}$, $\wt{3,3,3}$ or $\wt{2,2,2,2,2}$.
  \item \emph{hyperbolic}: If $\chi_\MM<0$ then $\wtilde{\MM}=\HH$, and $\MM$ has hyperbolic type.
\end{enumerate}
\end{thm}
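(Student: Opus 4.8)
\emph{Proof plan.} The idea is to derive the trisection directly from the Poincar\'e--Koebe uniformization by following the sign of the Euler characteristic through the combinatorial identity $\chi_\XX=2(1-g)-\sum_{i=1}^t(1-\frac{1}{a_i})$ recalled above. The first assertion is the orbifold version of the classical relation between the fundamental group and the universal cover, and I would establish it just as in the ramified case: the points of $\wtilde\MM$ are the orbifold homotopy classes of paths from the base point, prepending a loop defines an action of $\pi_1^{orb}(\MM)$ on $\wtilde\MM$ by holomorphic automorphisms commuting with $\pi$, this action is properly discontinuous and free off the fibres over the weighted points, the stabilizer of a point over $x_i$ is the cyclic group of order $a_i$ generated by $\si_i$, and the orbit space is $M$; reading off the ramification recovers the weight function, so $\MM=\wtilde\MM/\pi_1^{orb}(\MM)$.

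For the trisection, the exclusion of the bad (non-developable) orbifolds $\PP^1\wt{a_1,a_2}$ with $a_1\neq a_2$ is exactly what ensures that $\wtilde\MM$ is a genuine simply connected Riemann surface rather than an orbifold still carrying cone points; by uniformization it is then one of $\PP^1$, $\CC$ or $\HH$. I would match each model to a sign of $\chi_\MM$ as follows. If $\wtilde\MM=\PP^1$, then $\pi_1^{orb}(\MM)$ acts properly discontinuously on a compact space and is therefore finite, hence a finite subgroup of $\Aut{\PP^1}=\PSL{2}{\CC}$, i.e.\ a polyhedral group, and Riemann--Hurwitz gives $\chi_\MM=\chi_{\PP^1}/|\pi_1^{orb}(\MM)|=2/|\pi_1^{orb}(\MM)|>0$. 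If $\wtilde\MM=\CC$ or $\HH$, the flat, respectively the curvature $-1$, metric of the model descends to an orbifold metric on $\MM$, and the orbifold Gauss--Bonnet relation $\int_\MM K\,dA=2\pi\chi_\MM$ forces $\chi_\MM=0$, respectively $\chi_\MM<0$. Since both the possibilities for $\wtilde\MM$ and the three sign cases are exhaustive and mutually exclusive, these one-sided implications sharpen to the stated equivalences.

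It then remains to name the members of the spherical and parabolic families, which is a finite search using the identity for $\chi_\XX$. For $\chi_\MM>0$ one must have $g=0$ and $\sum_{i=1}^t(1-\frac{1}{a_i})<2$; discarding the excluded two-point weights, the solutions are precisely the polyhedral quotients $\PP^1\wt{n,n}$, $\PP^1\wt{2,2,n}$ and $\PP^1\wt{2,3,a}$ occurring in Figure~\ref{fig:DominanceGraph}, all with $\wtilde\MM=\PP^1$. For $\chi_\MM=0$ one has either $g=1$ with empty weight sequence, a smooth elliptic curve covered classically by $\CC$, or $g=0$ with $\sum_{i=1}^t(1-\frac{1}{a_i})=2$, yielding exactly the four tubular types $\wt{2,3,6}$, $\wt{2,4,4}$, $\wt{3,3,3}$ and $\wt{2,2,2,2}$, each a finite cyclic quotient of an elliptic curve and hence again uniformized by $\CC$. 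Everything else satisfies $\chi_\MM<0$ and is uniformized by $\HH$.

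The main obstacle is the bridge between the homological invariant $\chi_\MM=2\aveuform{\Oo}{\Oo}$ and the geometric sign of curvature: once the combinatorial formula for $\chi_\MM$ is available this reduces to the elementary arithmetic above, but without it one must run the Gauss--Bonnet argument on the descended constant-curvature metric directly. A secondary delicate point is confirming that the footballs $\PP^1\wt{a_1,a_2}$ with $a_1\neq a_2$ are the only non-developable cases, so that for every other weighted surface $\wtilde\MM$ really is a manifold; this is the orbifold-theoretic heart of the Bundgaard--Nielsen--Fox theorem.
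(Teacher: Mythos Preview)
Your approach is correct and matches the paper's: both arguments descend the constant-curvature K\"ahler metric from $\wtilde\MM\in\{\PP^1,\CC,\HH\}$ to the orbifold $\MM$ and read off the sign of $\chi_\MM$ via Gauss--Bonnet. The paper compresses this into two sentences; you have usefully spelled out the deck-transformation picture, the Riemann--Hurwitz shortcut in the spherical case, and the combinatorial enumeration of the spherical and parabolic weight types.

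One correction to your closing remark: the developability of $\MM$---that $\wtilde\MM$ carries no residual cone points---is \emph{not} the content of Bundgaard--Nielsen--Fox. Developability amounts to each $\si_i$ having exact order $a_i$ in $\pi_1^{orb}(\MM)$, which is elementary once $g\geq1$ or $t\geq3$ (project onto a free product of cyclic groups, or realize the triangle group geometrically). Bundgaard--Nielsen--Fox is the strictly stronger statement, proved only afterwards as Proposition~\ref{prop:Fox}, that $\pi_1^{orb}(\MM)$ contains a torsion-free normal subgroup of \emph{finite index}; it is this finiteness that supplies the compact intermediate cover in Corollary~\ref{cor:uniformization} and hence Theorem~\ref{thm:main}. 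In the paper's logic Theorem~\ref{thm:Uniformization} is an input to Fenchel's conjecture, not a consequence of it.
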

For the spherical (parabolic resp.\ hyperbolic) case the category $\coh\MM$ has tame domestic (tame tubular, resp.\ wild) representation type;
\begin{proof}
  The Riemann surfaces $\PP^1$, $\CC$ and $\HH$ admit K{\"a}hler metrics of constant curvature $+1$, $0$ and $-1$, respectively. This feature is inherited by the orbifold quotients $\MM$, see \cite{Shokurov:1998}. In view of the Gauss-Bonnet theorem, the curvature of $\MM$ is directly related to the orbifold Euler characteristic of $\MM$, yielding the trisection of the theorem.
\end{proof}

Assuming the above exclusion of certain weighted projective lines, Theorem~\ref{thm:Uniformization} implies that the groups appearing in Proposition~\ref{prop:FundamentalGroup} are just the (cocompact) fuchsian groups, that is the finitely generated subgroups $G$ of the automorphism groups $\Aut{X}$, for $X\in\{\PP^1,\CC,\HH\}$ with compact quotient $X/G$.

The following facts are well-known concerning fuchsian groups $G$ (or $F$-groups as they are often called):
\begin{enumerate}
  \item Finitely generated subgroups of fuchsian groups are again fuchsian.
  \item With the notation from Proposition~\ref{prop:FundamentalGroup} each torsion element of $G$ is conjugate to a power of some $\si_i$, $i=1,\ldots,t$.
\end{enumerate}

The statement of the next proposition is also known as \emph{Fenchel's conjecture}.
\begin{prop}[Bundgaard-Nielsen, Fox]\label{prop:Fox}
Each fuchsian group has a torsionfree normal subgroup of finite index.
\end{prop}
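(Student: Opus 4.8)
The plan is to reduce the proposition, via the presentation of Proposition~\ref{prop:FundamentalGroup}, to a single separation property of $G$ and then to read that property off from residual finiteness. Write the elliptic generators as $\si_1,\dots,\si_t$, so that $\si_i$ has finite order $d_i$ in $G$ (with $d_i=a_i$ except in the degenerate cases discussed below). First I would observe that it suffices to produce one finite group $Q$ and one homomorphism $\phi\colon G\to Q$ that is \emph{injective on each cyclic subgroup} $\langle\si_i\rangle$. Granting such a $\phi$, set $N=\ker\phi$: it is normal of finite index $|{\rm im}\,\phi|$, and it is torsionfree. Indeed, by the second of the two facts on fuchsian groups recalled above, every torsion element of $G$ is conjugate to some power $\si_i^k$; if $1\neq x\in N$ were torsion, then $x=h\si_i^k h^{-1}$ with $\si_i^k\neq1$, and normality of $N$ would force $\si_i^k=h^{-1}xh\in N$, i.e.\ $\phi(\si_i^k)=1$, contradicting injectivity of $\phi$ on $\langle\si_i\rangle$. (When $G$ is finite, as in the spherical case, one may simply take $Q=G$ and $N=1$.)

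The decisive point is that this reduces an a priori infinite set of conditions---avoiding \emph{all} conjugates of \emph{all} nontrivial powers of the $\si_i$---to a \emph{finite} one, because each $\langle\si_i\rangle$ is finite and there are only $t$ of them. I would then exploit that $G$ is residually finite: being a finitely generated subgroup of $\mathrm{Aut}(X)\subset\PSL{2}{\CC}$ for $X\in\{\PP^1,\CC,\HH\}$, it is a finitely generated linear group over $\CC$, hence residually finite by Mal'cev's theorem. Applying residual finiteness to each of the finitely many nontrivial elements $\si_i^{k}$ ($1\le i\le t$, $0<k<d_i$) yields finite quotients $\phi_{i,k}\colon G\to Q_{i,k}$ with $\phi_{i,k}(\si_i^{k})\neq1$; the diagonal map $\phi=(\phi_{i,k})$ into the (finite) product of the $Q_{i,k}$ is then injective on every $\langle\si_i\rangle$, and $N=\ker\phi$ is the group we want. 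The only genuine subtlety is the input used: the conjugacy description of torsion (cited above) is exactly what makes the finitely many separations suffice, and Mal'cev's theorem is what guarantees the separations exist.

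I should note two points about the boundary of the argument and an alternative route. The degenerate lines $\PP^1\wt{p,q}$ with $p\neq q$ are handled automatically: there $t=2$ and genus $0$ forces $\si_2=\si_1^{-1}$, so $G$ collapses to the finite cyclic group $\ZZ/\gcd(p,q)$ and one takes $N=1$---no order of $\si_i$ can be preserved, which is precisely why these lines are barred from Theorem~\ref{thm:main}, though not from the present statement. Finally, the classical constructive proof of Bundgaard, Nielsen and Fox builds the finite quotient by hand rather than invoking residual finiteness: for genus $g\ge1$ one chooses elements $g_i$ of orders $a_i$ in a large alternating group $A_n$ and writes $(g_1\cdots g_t)^{-1}$ as a single commutator (Ore, 1951), sending one handle onto it; for genus $0$ one must realise elements of orders $a_i$ in a finite group with product the identity, which is the Hurwitz realisation problem for branched covers of the sphere with uniform ramification and is the genuinely delicate combinatorial step. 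Either way, the result is also subsumed by Selberg's lemma quoted in the introduction.
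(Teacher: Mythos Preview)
Your argument is correct, and it takes a genuinely different route from the paper's. The paper does \emph{not} invoke residual finiteness. Instead, following Mennicke, it first kills the handle generators $\al_1,\ldots,\al_g,\be_1,\ldots,\be_g$ to reduce to the genus-zero case, then kills $\si_4,\ldots,\si_t$ to reduce further to a triangle group $\TT(a_1,a_2,a_3)$; only then does it appeal to Fox's explicit construction of permutations $c_1,c_2$ of orders $a_1,a_2$ whose product has order $a_3$, giving a concrete finite quotient in which the $\si_i$ keep their orders. Both arguments finish the same way, via the conjugacy description of torsion in fuchsian groups.

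Your approach is shorter and avoids the two-step reduction entirely, at the price of importing Mal'cev's theorem as a black box. The paper's approach is more elementary and, crucially, \emph{constructive}: the explicit permutation realisations are exactly what is exploited in the examples that follow (Klein's quartic for $\wt{2,3,7}$, etc.), and they allow one to control the genus of the covering Riemann surface. Your sketch of the ``classical constructive proof'' in the final paragraph (Ore's commutator trick for $g\ge1$, Hurwitz realisation for $g=0$) is a reasonable alternative route but is not the one the paper takes; the paper's reduction goes in the opposite direction, pushing \emph{everything} down to the three-generator genus-zero case rather than exploiting positive genus.
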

\begin{proof}
  We sketch the proof. Using the notation of Proposition~\ref{prop:FundamentalGroup} let $G$ be a fuchsian group. An elementary induction argument shows, see Mennicke's elegant paper \cite{Mennicke:1967,Mennicke:1968}, that it suffices to prove the claim for the factor group of $G$ modulo the normal subgroup generated by $\al_1,\ldots,\al_g$ and $\be_1,\ldots,\be_g$. The assertion thus reduces to deal with the orbifold fundamental group $G$ of a weighted projective line $\PP^1\wt{a_1,a_2,\ldots,a_t}$. Passing further to the factor group of $G$ by the normal subgroup generated by $\si_{4},\ldots,\si_t$, the claim reduces to the case of a \emph{triangle group} $\TT(a_1,a_2,a_3)=\wt{\si_1,\si_2,\si_3\mid \si_1^{a_1}=\si_2^{a_2}=\si_3^{a_3}=\si_1\si_2\si_3=1}$, that is, the orbifold fundamental group of $\PP^1\wt{a_1,a_2,a_3}$. In \cite{Fox:1952}, Fox constructs two finite permutations $c_1$ of order $a_1$ and $c_2$ of order $a_2$ such that $c_1c_2$ has order $a_3$. Putting $c_3=(c_1c_2)^{-1}$ yields an obvious homomorphism $h:\TT(a_1,a_2,a_3)\to G$, sending $\si_i$ to $c_i$, ($i=1,2,3$), where $G$ is the group generated by $c_1$ and $c_2$. Hence the kernel $N$ of $h$ has finite index in $\TT(a_1,a_2,a_3)$. Since $\si_1,\si_2´,\si_3$ keep their orders under $h$, and since each torsion element from $\TT(a_1,a_2,a_3)$ is conjugate to some power of $\si_1$, $\si_2$ or $\si_3$, the kernel of $h$ contains no nontrivial torsion.
\end{proof}

\begin{cor}\label{cor:uniformization}
  Each weighted compact Riemann surface $\MM$ as in Theorem~\ref{thm:Uniformization} is isomorphic to the orbifold quotient $M/G$ for a compact Riemann surface $M$ and a finite subgroup $G$ of $\Aut{M}$.
\end{cor}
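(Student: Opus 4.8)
The plan is to read off the corollary directly from the uniformization picture of Theorem~\ref{thm:Uniformization} once Fox's theorem (Proposition~\ref{prop:Fox}) is available. Write $\Gamma=\pi_1^{orb}(\MM)$ for the orbifold fundamental group. By Theorem~\ref{thm:Uniformization} the group $\Gamma$ is a cocompact fuchsian group acting as deck transformations on the universal orbifold cover $\wtilde{\MM}\in\{\PP^1,\CC,\HH\}$, with $\MM=\wtilde{\MM}/\Gamma$. First I would apply Proposition~\ref{prop:Fox} to produce a torsionfree normal subgroup $N$ of $\Gamma$ of finite index, and then set $M=\wtilde{\MM}/N$ and $G=\Gamma/N$. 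Since $[\Gamma:N]<\infty$ the group $G$ is finite; since $N$ is finitely generated it is again fuchsian by fact~(1) following Proposition~\ref{prop:Fox}, and being of finite index in the cocompact $\Gamma$ it is itself cocompact, so $M$ is compact. By construction $G$ acts on $M$ with $M/G=(\wtilde{\MM}/N)/(\Gamma/N)=\wtilde{\MM}/\Gamma=\MM$, so the whole problem reduces to showing that $M$ is a genuine compact Riemann surface, i.e.\ that it carries no remaining orbifold structure.

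The decisive step, and the only place where torsionfreeness enters, is the claim that $N$ acts \emph{freely} on $\wtilde{\MM}$. The only deck transformations in $\Gamma$ with a fixed point are the elliptic elements, and by fact~(2) following Proposition~\ref{prop:Fox} every torsion element of $\Gamma$ is conjugate to a power of one of the generators $\si_i$; conversely these powers are precisely the stabilizers of the points lying over the weighted points $x_i$. Thus an element of $\Gamma$ has a fixed point on $\wtilde{\MM}$ exactly when it is a nontrivial torsion element, so the torsionfree $N$ acts without fixed points. Consequently $M=\wtilde{\MM}/N$ is smooth, all of its weights equal to $1$, and $G=\Gamma/N$ is a finite subgroup of $\Aut{M}$. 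I expect this identification of torsion with the presence of fixed points to be the main obstacle: everything else is a formal quotient computation, whereas this step is what forces the quotient $M$ to be an honest Riemann surface rather than again an orbifold.

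It then remains to check that the orbifold quotient $M/G$ reproduces $\MM$ together with its weight function. The stabilizer in $G$ of a point of $M$ lying over a weighted point $x_i$ is the image of the cyclic group $\langle\si_i\rangle$ of order $a_i$; since $N$ meets $\langle\si_i\rangle$ trivially (being torsionfree), this cyclic group injects into $G$, so the corresponding orbit of $M/G$ acquires the weight $a_i=w(x_i)$, exactly as prescribed on $\MM$. Hence $\MM\iso M/G$ as weighted compact Riemann surfaces, with $M$ a compact Riemann surface and $G$ a finite subgroup of $\Aut{M}$, which is the assertion of the corollary.
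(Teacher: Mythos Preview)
Your proof is correct and follows essentially the same approach as the paper: apply Proposition~\ref{prop:Fox} to obtain a torsionfree normal subgroup $N\trianglelefteq\Gamma=\pi_1^{orb}(\MM)$ of finite index, set $M=\wtilde{\MM}/N$ and $G=\Gamma/N$, and identify $\MM$ with $M/G$. You supply more detail than the paper does---in particular the explicit justification that torsionfreeness of $N$ forces the action on $\wtilde{\MM}$ to be free, and the verification that the weights are recovered---but the structure of the argument is the same (note only that the facts~(1) and~(2) you cite appear in the paper just \emph{before} Proposition~\ref{prop:Fox}, not after it).
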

\begin{proof}
  Let $G=\pi_1^{orb}{\MM}$. By Theorem~\ref{thm:Uniformization}, $\MM$ is isomorphic to the orbifold quotient $\wtilde\MM/G$. Let $N$ be a normal subgroup of finite index having no torsion elements, then $N$ acts on $\wtilde\MM$ without fixed point, yielding therefore a quotient $M=\wtilde\MM/N$ that is an ordinary Riemann surface. Putting $H=G/N$, the finite group $H$ acts on $M$ with quotient $M/H=(\wtilde{\MM}/N)/(G/N)=\wtilde{\MM}/G=\MM$. Since $\MM$ is compact and $H$ is finite, also the Riemann surface $M$ is compact.
\end{proof}
In view of Proposition~\ref{prop:equiv},  Corollary~\ref{cor:uniformization} finishes the proof of Theorem~\ref{thm:main}.

\begin{ex}
By means of a current computer algebra system it is easy to determine finite permutation groups satisfying the relations of a given triangle group $\TT(a_1,a_2,a_3)$. Here are a few examples:
\begin{itemize}
\item The permutations $(1, 2)(3, 6)$ and $(1, 2, 3, 4, 5, 6, 7)$ of order 2 and 7 have a product $(1, 3, 7)(4, 5, 6)$ of order $3$. The generated permutation group $G$ is the unique simple group $G_{168}=\PSL{2}{7}$ of order $168$. By Corollary~\ref{cor:uniformization} there hence exists a compact Riemann surface $M$ establishing $\PP^1\wt{2,3,7}$ as the orbifold quotient $M/G$. Invoking the Riemann-Hurwitz formula, we deduce that $M$ has Euler characteristic $-\frac{1}{42}\cdot 168=-4$ (and genus 3). And indeed, Klein's quartic $xy^3+yz^3+zx^3$ has Euler characteristic $-3$ and automorphism group $G_{168}$ and leads to the quotient $\PP^1\wt{2,3,7}$, see \cite{Adler:1999}.
\item The permutations $(3, 4)(5, 7)$ and $(1, 2, 3, 4, 5, 6, 7)$ of order 2 and 7 have a product $(1, 2, 3, 5)(6, 7)$ of order 4; they generate a group, that is again the simple group $G_{168}$. By Corollary~\ref{cor:uniformization} this realizes $\PP^1\wt{2,4,7}$ as the orbifold quotient $M/G_{168}$, where $M$ is a compact Riemann surface of Euler characteristic $-18$ (or genus 10) that can be identified as the Hessian determinant curve associated to Klein's quadric, see~\cite{Adler:1999}.
  \item The permutations $(1, 4)(2, 6)(3, 7)(5, 8)$ and $(1, 2, 3, 4, 5, 6, 7, 8, 9)$ have order 2 and 9 with a product $(1, 5, 9)(2, 7, 4)(3, 8, 6)$ of order 3. These permutations generate a simple group $G$ of order $504$ and thus yield a realization $M/G$ of $\PP^1\wt{2,3,9}$ as the orbifold quotient of a compact Riemann surface $M$ of Euler characteristic $-28$ (or genus $15$).
\end{itemize}
\end{ex}
Nowadays there exist several alternatives to Fox's proof of Fenchel's conjecture. While Fox's proof is based on finite permutation groups, there is a proof due to Mennicke~\cite{Mennicke:1967,Mennicke:1968} using $3\times 3$-matrices with entries in an algebraic number field determined by the three weights $\wt{a,b,c}$. Another proof, attributed to Macbeath, using $2\times2$-matrices over finite fields, appears in the book of Zieschang, Vogt and Coldewey~\cite{Zieschang:Vogt:Coldewey:1980}, compare also the article of Feuer~\cite{Feuer:1971}. Finally, in view of Theorem~\ref{thm:Uniformization}, Proposition~\ref{prop:Fox} turns out to be a special case of Selberg's lemma~\cite{Alperin:1987}, stating that  in characteristic zero finitely generated matrix groups have normal torsionfree subgroups of finite index. We also mention that there is a vast literature about representing weighted Riemann surfaces, in particular weighted projective lines, as orbifold quotient of compact Riemann surfaces of (preferably small) genus. For further information on this topic we refer to \cite{Breuer:2000} and the literature quoted there.

\section{Realization techniques}
In this section we collect a number of general results, originating in the theory of weighted projective lines, supporting to identify a given weighted projective line as an orbifold quotient of a weighted projective curve, given by explicit homogeneous equations.

Let $\XX=\PP^1\wt{a_1,a_2,\ldots,a_t}$ be a weighted projective line. We recall~\cite{Geigle:Lenzing:1987} that the projective coordinate algebra $S$ of $\XX$ is the quotient of the polynomial algebra $k[x_1,\ldots,a_t]$ by the ideal generated by the canonical equations $x_i^{a_i}=x_2^{a_2}-\la_i x_1^{a_1}$, $i=3,\ldots,t$. Here, the $\la_i$ are supposed to be non-zero and pairwise distinct. The algebra $S$ is graded by the rank-one abelian group $\LL=\LL(a_1,\ldots,a_t)$ on generators $\vx_1,\vx_2,\ldots,\vx_t$ subject to the relations $a_1\vx_1=a_2\vx_2=\cdots=a_t\vx_t$. This $\LL$-graded algebra $S$ yields by Serre construction the category of coherent sheaves on $\XX$, see \cite{Geigle:Lenzing:1987}.

By means of the degree homomorphism $\delta:\LL(a_1,a_2,\ldots,a_t)\to \ZZ$, $\vx_i\mapsto \lcm{a_1,\ldots,a_t}/a_i$, the algebra $S$ can alternatively be viewed to be $\ZZ$-graded. By $\ZZ$-graded Serre construction we then obtain a weighted projective curve $\YY=\YY\tw{a_1,a_2,\ldots,a_t}$ such that
$$
\coh\YY=\frac{\modgr{\ZZ}{S}}{\modgrnull{\ZZ}{S}}.
$$
We call $\YY$ the \emph{twisted companion} of $\XX$.

\begin{prop}[\cite{Lenzing:2016}] \label{prop:twisted:companion}
In characteristic zero, the twisted companion $\YY=\YY\tw{a_1,a_2,\ldots,a_t}$ of the weighted projective line $\XX=\PP^1\wt{a_1,a_2,\ldots,a_t}$ is a weighted smooth projective curve with the following properties where $\bar{a}=\lcm{a_1,a_2,\ldots,a_t}$.
\begin{enumerate}
\item $\XX$ is isomorphic to the orbifold quotient $\YY/G$, where $G=\frac{\mu_{a_1}\times\mu_{a_2}\times \cdots \times \mu_{a_t}}{\mu_{\bar{a}}}$.
\item If the degrees $|x_i|=\delta(\vx_i)$, $i=1,\ldots,t$, are pairwise coprime, then $\YY$ is a (nonweighted) smooth projective curve.\qed
\end{enumerate}
\end{prop}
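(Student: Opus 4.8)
The statement is an instance of the duality between abelian group gradings and actions of diagonalizable groups, turned geometric by the Serre construction. The plan is to regrade $S$ from $\LL=\LL(a_1,\ldots,a_t)$ to $\ZZ$ along $\delta$, set $K=\ker\delta$, and show that the finite group $G$ is exactly the character group $\Hom{K}{\CC^*}$; then the $\LL$-graded modules become the $G$-equivariant $\ZZ$-graded modules, and passage to the Serre quotients yields both assertions.

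First I would pin down $K$ and $G$. Because $\gcd(\bar a/a_1,\ldots,\bar a/a_t)=1$ — no prime can divide every $\bar a/a_i$, since $\bar a=\lcm{a_1,\ldots,a_t}$ — the map $\delta$ is onto $\ZZ$, so $\LL/K\iso\ZZ$ and $K$ is precisely the (finite) torsion subgroup of $\LL$; in particular $G$ will be finite. A character of $\LL$ is a tuple $(\xi_1,\ldots,\xi_t)\in(\CC^*)^t$ with $\xi_1^{a_1}=\cdots=\xi_t^{a_t}$, and those factoring through $\delta$ form the one-parameter subgroup $T=\{(z^{\bar a/a_1},\ldots,z^{\bar a/a_t})\mid z\in\CC^*\}$. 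Dualizing $0\to K\to\LL\to\LL/K\to 0$ (here $\CC^*$ is injective) gives $\Hom{K}{\CC^*}\iso\Hom{\LL}{\CC^*}/T$. Now the subgroup $\mu_{a_1}\times\cdots\times\mu_{a_t}$ maps onto this quotient, and its intersection with $T$ is the image of $\mu_{\bar a}$ under $z\mapsto(z^{\bar a/a_1},\ldots,z^{\bar a/a_t})$, a map injective by the same $\gcd=1$ fact. Hence $\Hom{K}{\CC^*}\iso(\mu_{a_1}\times\cdots\times\mu_{a_t})/\mu_{\bar a}=G$.

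Next comes the duality itself, giving part (1). The $\LL$-grading of $S$ is the same datum as an action of the diagonalizable group $\Hom{\LL}{\CC^*}$; coarsening along $\delta$ to the $\ZZ$-grading corresponds to restricting to the subtorus $T$, and the quotient $G=\Hom{\LL}{\CC^*}/T$ then acts on the $\ZZ$-graded objects so that the $\LL$-graded modules are precisely the $G$-equivariant ones. As the regrading is exact and carries the Serre subcategory of finite length modules to itself, the equivalence descends to the Serre quotients, giving $\coh\XX\iso\eqcoh{G}{\YY}=(\coh\YY)[G]$; by Proposition~\ref{prop:equiv} this is $\coh{\YY/G}$, whence $\XX\iso\YY/G$. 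That $\YY$ is a \emph{smooth} weighted projective curve I would deduce from the characterization recalled in Section~2: it suffices that $\coh\YY$ be abelian hereditary noetherian with Serre duality and a commutative one-variable function field, and all of these transfer across the finite equivariant correspondence except heredity. Heredity — equivalently smoothness — is where characteristic zero is indispensable: since $\CC G$ is semisimple, passing between $\coh\XX=(\coh\YY)[G]$ and $\coh\YY$ preserves global dimension, so the known heredity of $\coh\XX$ forces $\coh\YY$ to be hereditary. I expect this to be the main obstacle, as it is the point where tameness of the cover and the char-zero hypothesis really enter.

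Finally, for part (2) I would compute the residual weights of $\YY$ over the exceptional points of $\XX$. Realizing $\YY$ as $\mathrm{Proj}$ of the $\ZZ$-graded algebra $S$ inside the weighted projective space $\PP(\bar a/a_1,\ldots,\bar a/a_t)$, pairwise coprimality of the degrees $\bar a/a_i$ confines the singular (orbifold) locus of the ambient space to its coordinate vertices. The defining equations $x_i^{a_i}=x_2^{a_2}-\la_i x_1^{a_1}$ force any point with all but one coordinate vanishing to reduce to the origin, so $\YY$ avoids every vertex and therefore meets only the smooth, orbifold-free locus; hence $\YY$ is an ordinary smooth projective curve. Equivalently, in the description $a_i=w_{\YY}(x)\cdot|G_x|$ of the orbifold quotient from Section~2, coprimality makes the stabilizers maximal of order $a_i$, so every residual weight equals $1$.
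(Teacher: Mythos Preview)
The paper does not actually supply a proof of this proposition: it is stated with a citation to \cite{Lenzing:2016} (a work listed as in preparation) and closed with a bare \qed. There is therefore no internal argument to compare against; your proposal stands on its own.

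Your approach is correct and is the natural one. The identification of $G$ with the character group $\Hom{K}{\CC^*}$ of $K=\ker\delta$ is carried out cleanly, and the key step---that refining a $\ZZ$-grading to an $\LL$-grading is the same as specifying a $G$-equivariant structure---is exactly the graded/equivariant duality one expects over $\CC$. The descent to Serre quotients is unproblematic since finite length is tested on underlying modules, and your use of the semisimplicity of $\CC G$ to transport heredity from $\coh\XX$ back to $\coh\YY$ is the right place to invoke characteristic zero. For part~(2), both the geometric argument (under pairwise coprimality the orbifold locus of the ambient weighted projective space is confined to the coordinate vertices, which the canonical equations exclude from $\YY$) and the stabilizer computation are valid and mutually reinforcing.

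One expository remark: the clause ``all of these transfer across the finite equivariant correspondence except heredity'' is doing real work---Serre duality on $\coh\YY$, noetherianity, and the identification of the function field are each being asserted. None is hard (for instance, the dualizing element $\vom\in\LL$ has image $\delta(\vom)$ in $\ZZ$, giving the Serre functor on the $\ZZ$-graded side), but in a written-up version you would want at least a sentence on each rather than a blanket claim.
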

\begin{ex}
\begin{enumerate}
\item Prime examples for nonweighted twisted companion curves are the smooth elliptic curves $\YY\tw{2,3,6}$, $\YY\tw{2,4,4}$, $\YY\tw{3,3,3}$ and $\YY\tw{2,2,2,2;\lambda}$ corresponding to the four tubular weight types.
\item A hyperbolic example is given by the smooth projective companion curve $\YY\tw{2,6,6}$ of Euler characteristic $-2$ (genus 2), where the defining polynomial $x_1^2+x_2^6+x_3^6$ is graded by giving $(x_1,x_2,x_3)$ degrees $(3,1,1)$.
\item Also important, and classical, are the \emph{Fermat curves} $\Ff_a=\YY\tw{a,a,a}$, where the variables of the defining polynomial $x_1^a+x_2^a+x_3^a$ all get degree one. $\Ff_a$ has Euler characteristic $-a(a-3)$.
\item Slightly more general, the companion curves $\YY\tw{a,a,\ldots,a}$, for $t\geq2$ identical weights concentrated in $t$ pairwise distinct points of $\PP^1$, yield smooth projective curves of Euler characteristic $-a^{t-2}((t-2)a-t)$.
\end{enumerate}
\end{ex}
For $t$ identical weights we abbreviate $\PP^1\wt{a,a,\ldots,a}$ and $\YY\tw{a,a,\ldots,a}$ by $\PP^1\wt{a^{[t]}}$ and $\YY\tw{a^{[t]}}$, respectively. Similarly, for a finite subset $A$ of $\PP^1$ where each member of $A$ obtains weight $a$, we write $\PP^1\wt{a^{[A]}}$ for the corresponding weighted projective line and $\YY\tw{a^{[A]}}$ for its twisted companion curve.

Let $P$ be a polyhedral group. Then $P$ is either the cyclic group $C_n$ of order $n\geq1$, or the dihedral group of order $2n$, $n\geq2$ or else the tetrahedral group $A_3$ of order $12$, the octahedral group $A_4$ of order $24$ or the icosahedral group $A_5$ of order $60$. The last three cases we call \emph{platonic} and let $P_3$, $P_4$ and $P_5$ be, respectively, the tetrahedral, octahedral and icosahedral group. We note that $P_n$ has order $\frac{12n}{6-n}$.

Our next result produces quite a number of weighted projective lines, represented as an orbifold quotients $M/G$, where $M$ is a smooth projective curve given by an explicit system of canonical equations. We say that the resulting weighted projective lines and their weight types have \emph{polyhedral type}.

\begin{thm}[Polyhedral Symmetries] \label{thm:polyhedral}
Let $\eps_1,\eps_2,\eps_3\in\{0,1\}$, $a$ an integer $\geq1$ and $r$ an integer $\geq0$. Let $P$ be a finite subgroup of $\Aut{\PP^1}$. We distinguish the following cases:
\begin{enumerate}
  \item \emph{The cyclic case $P=C_n$:} There exists a $P$-stable subset $A$ of $\PP^1$ of cardinality $|A|=\eps_1+\eps_2+r$ such that $\PP^1\wt{a^{[A]}}/P=\PP^1\wt{a^{\eps_1}n,a^{\eps_2}n,a^{[r]}}$.
  \item\emph{The dihedral case $P=D_n$:} There exists a $P$-stable subset $A$ of $\PP^1$ of cardinality $|A|=2\eps_1+n(\eps_2+\eps_3)+2nr$ such that $\PP^1\wt{a^{[A]}}/P=\PP^1\wt{2\cdot a^{\eps_1},2\cdot a^{\eps_2},n\cdot a^{\eps_3},a^{[r]}}$.
  \item \emph{The platonic case $P=P_n$:} For each $n\in\{3,4,5\}$ there exists a $P$-stable subset $A=A_n$ of cardinality $|A|=\frac{12n}{6-n}\cdot\left(\frac{\eps_1}{2}+\frac{\eps_2}{3}+\frac{\eps_3}{n}+r\right)$ with quotient $\PP^1\wt{a^{[A]}}/P=\PP^1\wt{2a^{\eps_1},3a^{\eps_2},na^{\eps_3},a^{[r]}}$.
\end{enumerate}
Moreover, in each of the above cases the following holds:

The twisted companion $\YY=\YY\tw{a^{[A]}}$ of $\PP^1\wt{a^{[A]}}$ is a smooth projective curve. The $P$-action on $\PP^1\wt{a^{[A]}}$ lifts to a $P$-action on $\YY$, inducing on $\YY$ an action of $G=\frac{\mu_a^b}{\mu_a}\rtimes P$ with quotient $\YY/G=\PP^1\wt{a^{[A]}}/P$.
\end{thm}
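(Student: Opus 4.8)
The plan is to handle the three cases by one geometric recipe and then to build the companion curve together with the lifted group in a single construction.

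First I would invoke the classical orbit geometry of a finite subgroup $P\subset\PSL{2}{\CC}$ acting on $\PP^1$. Every orbit is free of size $|P|$ except for finitely many \emph{special} orbits, whose points have a nontrivial, necessarily cyclic, stabilizer. For $P=C_n$ there are exactly two special orbits, each a fixed point with stabilizer $C_n$; for $P=D_n$ there are three, the pole pair (size $2$, stabilizer $C_n$) and two ``reflection'' orbits (size $n$, stabilizer $C_2$); for $P=P_n$ there are three special orbits of sizes $|P_n|/2$, $|P_n|/3$, $|P_n|/n$ with cyclic stabilizers of orders $2,3,n$. In each case the untwisted quotient $\PP^1/P$ is the spherical orbifold $\PP^1\wt{n,n}$, $\PP^1\wt{2,2,n}$ resp.\ $\PP^1\wt{2,3,n}$, as recorded in the polyhedral example above. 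I would take $A$ to be the union of the special orbits selected by $\eps_1,\eps_2,\eps_3$ together with $r$ chosen free orbits, and assign every point of $A$ the weight $a$; since $P$ permutes $A$ and preserves the constant weight, $P\subset\Aut{\PP^1\wt{a^{[A]}}}$. The cardinality of $A$ is the sum of the sizes of the selected orbits, which yields the value of $|A|$ stated in each case, while the orbifold rule $\bar w(Px)=w(x)\,|P_x|$ gives weight $|P_x|\cdot a^{\eps_i}$ over each selected special point and weight $a$ over each free orbit, reproducing the weight symbols in all three cases.

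Next I would produce the twisted companion and check smoothness. Since every weight of $\PP^1\wt{a^{[A]}}$ equals $a$, we have $\bar a=\lcm{a,\ldots,a}=a$, so all degrees $\delta(\vx_p)=\bar a/a=1$ are (trivially) pairwise coprime; Proposition~\ref{prop:twisted:companion}(2) then gives that $\YY=\YY\tw{a^{[A]}}$ is a nonweighted smooth projective curve, and part (1) exhibits $\YY\to\PP^1\wt{a^{[A]}}$ as a Galois cover with deck group $D=\mu_a^b/\mu_a$, where $b=|A|$. Concretely $D$ realizes $\YY$ as the abelian cover of $\PP^1$ with uniform ramification $a$ along $A$, determined by the canonical surjection sending a loop around each point of $A$ to the corresponding generator.

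The heart of the matter, and the step I expect to be the main obstacle, is to lift the $P$-action from $\PP^1\wt{a^{[A]}}$ to an honest, split action on $\YY$ normalizing $D$. The useful feature of the $\ZZ$-grading is that every generator $x_p$, $p\in A$, has degree $\delta(\vx_p)=1$ and cuts out the branch point $p$; thus the $x_p$ form a symmetric system of degree-one generators of the coordinate algebra $S$, exactly as in the Fermat model $\YY\tw{a^{[A]}}$ for $|A|=3$. Since $P$ permutes $A$, it permutes the $x_p$ up to scalars, acting on $S$ by a \emph{monomial} transformation, a permutation composed with a rescaling, so the group generated by $D$ and the lifts of $P$ lies in the monomial subgroup $(\CC^{*})^{b}\rtimes\Sigma_{b}$ of $\mathrm{GL}_{b}$. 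This makes both the normality of $D=\mu_a^b/\mu_a$ and the semidirect structure $G=(\mu_a^b/\mu_a)\rtimes P$ transparent, with $P$ acting on $D$ through its permutation of $A$. The delicate point is the splitting: the scalars in the monomial lifts are constrained by the defining equations, and a finite $P\subset\PSL{2}{\CC}$ need not lift linearly, its preimage being a nonsplit binary polyhedral group. I would discharge this by passing to $\YY=\mathrm{Proj}_{\ZZ}S$, on which the overall scaling $\mathbb{G}_m$, in particular the central element $-I$, acts trivially, so that the residual ambiguity is confined to $D$ and the extension splits. Identifying the quotient is then formal, $\YY/G=(\YY/D)/P=\PP^1\wt{a^{[A]}}/P$, the orbifold computed in the first step, and the Riemann-Hurwitz identities $\chi_\YY=|D|\cdot\chi_{\PP^1\wt{a^{[A]}}}$ and $\chi_{\PP^1\wt{a^{[A]}}/P}=\chi_{\PP^1\wt{a^{[A]}}}/|P|$ provide a final consistency check.
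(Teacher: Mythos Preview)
Your argument is correct and follows essentially the same route as the paper's: describe the special orbits of $P$ on $\PP^1$, take $A$ to be the union of the selected special orbits and $r$ free orbits with constant weight $a$, read off the quotient weights from the orbifold rule $\bar w(Px)=w(x)\cdot|P_x|$, and then invoke Proposition~\ref{prop:twisted:companion} to identify $\YY\tw{a^{[A]}}$ as a smooth curve with the abelian Galois group $\mu_a^{|A|}/\mu_a$. The paper in fact writes out only the platonic case and then simply asserts the semidirect-product structure in a single sentence, whereas you treat all three cases uniformly and spend effort on the lifting and splitting of the $P$-action through the monomial group; that extra care is not demanded by the paper, which leaves the splitting implicit.
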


\begin{proof} We only deal with the platonic case, where $P=P_n$ with $n\in\{3,4,5\}$. Then $\PP^1/P_n$ has three orbits $Pz_1$, $Pz_2$ and $Pz_3$ with stabilizer groups $P_{z_1}=C_2$, $P_{z_2}=C_3$ and $P_{z_3}=C_n$. All other points $y\in \PP^1\setminus (Pz_1\copr Pz_2 \copr Pz_3)$ have trivial stabilizer group. A finite $P$-stable subset $A$ of $\PP^1$ hence has the form
  \begin{equation}
A=\eps_1(P.z_1)\copr \eps_2(P.z_2) \copr \eps_3(P.z_3) \copr\left(P.y_1\copr\cdots\copr P.y_r \right)
\end{equation}
for $\eps_1,\eps_2,\eps_3\in\{0,1\}$, $r\geq0$ and distinct points $y_1,y_,\ldots,y_r$ from the complement of $Pz_1\copr Pz_2 \copr Pz_3$. If each point of $A$ is given weight $a$ ($a\geq1)$, then the orbifold quotient of $\PP^1\wt{a^{[A]}}$ is the weighted projective line $\PP^1\wt{2a^{\eps_1},3a^{\eps_2},na^{\eps_3},a^{[r]}}$. Observing that $\PP^1\wt{a^{[A]}}$ is the quotient of $\YY\tw{a^{[A]}}$ by the group $\frac{\mu_a^{|A|}}{\mu_a}$ then proves the claim.
\end{proof}
\begin{cor} \label{cor:polyhedral}
For integers $n\geq2$, $a\geq 1$ we have the following realizations
of weighted projective lines as quotients of smooth projective curves:

\begin{minipage}{0.4\textwidth}
\begin{eqnarray*}
 \PP^1\wt{n,n,a} &=&\YY\tw{a^{[n]}}/\frac{\mu_a^{[n]}}{\mu_a}\rtimes C_n,\\
 \PP^1\wt{2,2a,n} &=& \YY\tw{a^{[n]}}/\frac{\mu_a^n}{\mu_a}\rtimes D_n,\\
 \PP^1\wt{3,3,2a} &=& \YY\tw{a^{[6]}}/\frac{\mu_a^6}{\mu_a}\rtimes A_4,\\
\end{eqnarray*}
\end{minipage}
\begin{minipage}{0.4\textwidth}
\begin{eqnarray*}
\PP^1\wt{n,a,an} &=& \YY\tw{a^{[n]},a}/\frac{\mu_a^{n+1}}{\mu_a}\rtimes C_n, \\
\PP^1\wt{3,4,2a} &=& \YY\tw{a^{[12]}}/\frac{\mu_a^{12}}{\mu_a}\rtimes S_4,\\
 \PP^1\wt{3,5,2a} &=& \YY\tw{a^{[30]}}/\frac{\mu_a^{30}}{\mu_a}\rtimes A_5. \qed\\
\end{eqnarray*}
\end{minipage}
\end{cor}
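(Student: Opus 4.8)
The plan is to read off each of the six identities as a single explicit specialization of Theorem~\ref{thm:polyhedral}, so that no new geometry is required: I only have to choose the group $P$ and the indicator data $\eps_1,\eps_2,\eps_3\in\{0,1\}$, $r\geq0$ so that the quotient weight symbol produced by the theorem matches the left-hand side, and then invoke the ``Moreover'' clause to obtain the twisted-companion description on the right. Since Theorem~\ref{thm:polyhedral} already delivers both the weighted quotient $\PP^1\wt{a^{[A]}}/P$ and the smooth companion $\YY=\YY\tw{a^{[A]}}$ with its lifted $G$-action, the corollary is a pure substitution.

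Concretely, for the two cyclic identities I take $P=C_n$ and match the cyclic quotient symbol $\PP^1\wt{a^{\eps_1}n,a^{\eps_2}n,a^{[r]}}$: using $a^0n=n$, $a^1n=an$ and reading $a^{[1]}$ as a single point of weight $a$, this gives $(\eps_1,\eps_2,r)=(0,0,1)$ for $\PP^1\wt{n,n,a}$ and $(\eps_1,\eps_2,r)=(0,1,1)$ for $\PP^1\wt{n,a,an}$. For the dihedral identity I take $P=D_n$ and, exploiting the symmetry of the two weight-$2$ slots in $\PP^1\wt{2a^{\eps_1},2a^{\eps_2},na^{\eps_3}}$, set $(\eps_1,\eps_2,\eps_3,r)=(0,1,0,0)$, producing $\PP^1\wt{2,2a,n}$. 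For the three platonic identities I take $P=A_4$, $P=S_4$ and $P=A_5$ (that is $P_n$ for $n=3,4,5$) with $(\eps_1,\eps_2,\eps_3,r)=(1,0,0,0)$ throughout; the platonic symbol $\PP^1\wt{2a^{\eps_1},3a^{\eps_2},na^{\eps_3},a^{[r]}}$ then collapses to $\PP^1\wt{2a,3,n}$, i.e. to $\PP^1\wt{3,3,2a}$, $\PP^1\wt{3,4,2a}$ and $\PP^1\wt{3,5,2a}$.

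The step where the only genuine bookkeeping lies is identifying the companion curve, i.e. computing $|A|$ for each choice. Rather than lean on the compact cardinality formula, I would build $A$ directly as the union of $P$-orbits prescribed by the indicators, exactly as in the proof of Theorem~\ref{thm:polyhedral}, and count points orbit by orbit. A free $C_n$-orbit has $n$ points and each pole is a single point, so the cyclic choices give $|A|=n$ and $|A|=n+1$; a free $D_n$-orbit has $2n$ points, the pole orbit has $2$ points and each reflection orbit has $n$ points, so the dihedral choice gives $|A|=n$; and for $P_n$ a free orbit has $\frac{12n}{6-n}$ points while the orbits through the three branch points $z_1,z_2,z_3$ have sizes $\frac{1}{2},\frac{1}{3},\frac{1}{n}$ of this, so switching on only the $z_1$-orbit yields $|A|=\frac{12n}{6-n}\cdot\frac{1}{2}$, namely $6,12,30$ for $n=3,4,5$. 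These are exactly the exponents occurring in $\YY\tw{a^{[n]}}$, $\YY\tw{a^{[n]},a}$, $\YY\tw{a^{[6]}}$, $\YY\tw{a^{[12]}}$ and $\YY\tw{a^{[30]}}$.

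Finally, the ``Moreover'' clause of Theorem~\ref{thm:polyhedral} supplies, for each of these choices, the smooth projective twisted companion $\YY=\YY\tw{a^{[A]}}$ together with the lifted action of $G=\frac{\mu_a^{|A|}}{\mu_a}\rtimes P$ satisfying $\YY/G=\PP^1\wt{a^{[A]}}/P$; inserting the values of $|A|$ and $P$ found above converts the six quotient identities into the six displayed realizations. I expect the main obstacle to be combinatorial rather than conceptual: keeping the orbit-size accounting correct so that $|A|$ agrees with the stated exponents, and—specifically in the dihedral case—assigning the indicators to the right slots, since the two weight-$2$ points arise from the size-$n$ reflection orbits while the weight-$n$ point arises from the size-$2$ pole orbit.
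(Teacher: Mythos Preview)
Your proposal is correct and follows exactly the approach the paper intends: the corollary carries only a \qed in the paper, so it is treated as an immediate specialization of Theorem~\ref{thm:polyhedral}, and your case-by-case choice of $(P;\eps_1,\eps_2,\eps_3,r)$ together with the ``Moreover'' clause is precisely that specialization. Your decision to recover $|A|$ by direct orbit counting rather than from the stated cardinality formulas is prudent, since the cyclic formula in the theorem as printed would give $|A|=\eps_1+\eps_2+r$ rather than the needed $\eps_1+\eps_2+rn$; your orbit count yields the correct values $n$ and $n+1$ matching $\YY\tw{a^{[n]}}$ and $\YY\tw{a^{[n]},a}$.
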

We illustrate Theorem~\ref{thm:polyhedral} and Corollary~\ref{cor:polyhedral} by Table~\ref{tab:arnold} discussing the realizability of the entries from Arnold's strange duality list, consisting of the 14  weight triples (socalled Dolgachev numbers) yielding a fuchsian singularity that is a hypersurface. Whenever possible, we have included a realization of polyhedral type. The table lists for each of the $14$ weight types $\wt{a,b,c}$ a realization of $\XX=\PP^1\wt{a,b,c}$ as a quotient $M/G$ for a compact Riemann surface (smooth projective curve) $M$, and a realization of $M$ by equations.

By $\Ff_n$ we denote the \emph{Fermat curve} $x^n+y^n+z^n$, where $x,y,z$ get degree one. Further, $\Kk_4$ denotes Felix Klein's quartic curve, and correspondingly
$G_{168}=\PSL{2}{7}$ denotes the (unique) simple group of order $168$. That $G_{168}$ acts on $\Kk_4$ with quotient $\PP^1\wt{2,3,7}$ was proved by Klein in 1879, see the book \cite{Levy:1999} which is devoted entirely to the various aspects of Klein's curve. In particular, we refer to \cite[Prop.\ 12.1 and 12.2]{Adler:1999} showing the realization of $\wt{2,4,7}$ through the Hessian determinant $5x^2y^2z^2-(xy^5+yz^5+zx^5)$ of $\Kk_4$. We note that with the exception of $\wt{2,3,7}$ all weight types from the list have a polyhedral realization. Note in this context that polyhedral realizations tend to have a high genus!

\begin{rem}
We may merge Theorem~\ref{thm:polyhedral} with Proposition~\ref{prop:twisted:companion} to achieve a generalization of Theorem~\ref{thm:polyhedral}. With the notations of the theorem we may endow each point $x_i$ of the finite $P$-stable subset $A$ of $\PP^1$ by an individual weight $a_i$ such that the $a_i$ are pairwise coprime, instead giving all $x_i$ the same weight $a$. This leads to the concept of \emph{generalized polyhedral type}. An example is the first realization for weight type $\wt{2,4,6}$ from Table~\ref{tab:arnold}.
\end{rem}
\small
\begin{table}[ht!]
\def\hl{\hline}
\renewcommand{\arraystretch}{1.5}
 \caption{Strange duality weights}
 \begin{tabular}{|c|c|c|c|c|c|c|}\hl
weights   & $G$         & $|G|$ & $-\chi_\XX$& $-\chi_M$& $g_M$& curve $M$/equations\\\hl
$\wt{2,3,7}$&$G_{168}$    &168    &$\frac{1}{42}$& $4$& 3 &$\Kk_4: x^3y+y^3z+z^3x$\\\hl
$\wt{2,3,8}$&$\frac{\mu_4^3}{\mu_4}\rtimes D_3$&$96$&$\frac{1}{24}$&4&3&$\Ff_4:x^4+y^4+z^4$ \\\hl
$\wt{2,3,9}$&$\frac{\mu_3^4}{\mu_e}\rtimes A_4$&$396$&$\frac{2}{3}$&$18$&$10$& $\YY\tw{3,3,3,3}$\\\hl
$\wt{2,4,5}$&$\frac{\mu_2^5}{\mu_2}\rtimes D_5$&$160$&$\frac{1}{20}$&$8$&$5$&$\YY\tw{2,2,2,2,2}$\\\hl
$\wt{2,4,6}$&$\frac{\mu_3\times\mu_6^2}{\mu_6}\rtimes D_2$&$72$&$\frac{1}{12}$ &$6$&$4$& $\YY\tw{3,6,6}$\\\cline{2-7}
           &$\frac{\mu_2^6}{\mu_2}\rtimes D_6$&$384$&$\frac{1}{12}$&$32$&$17$&
           $\YY\tw{2,2,2,2,2,2}$\\\hl
$\wt{2,4,7}$&$G_{168}$&$168$&$\frac{3}{28}$&$18$&$10$&$5x^2y^2z^2-(xy^5+yz^5+zx^5)$\\\cline{2-7}
   &$\frac{\mu_2^7}{\mu_2}\rtimes D_7$&$2^7\cdot7$&$\frac{3}{28}$&$96$&$49$&$\YY\tw{2,2,2,2,2,2,2}$\\\hl
$\wt{2,5,5}$&$\frac{\mu_2^5}{\mu_2}\rtimes C_5$&$80$&$\frac{1}{10}$&$8$&5&$\YY\tw{2,2,2,2,2}$\\\hl
$\wt{2,5,6}$&$\frac{\mu_3^5}{\mu_3}\rtimes D_5$&$810$&$\frac{2}{15}$&$108$&$55$&$\YY\tw{3,3,3,3,3}$\\\hl
$\wt{3,3,4}$&$\frac{\mu_4^3}{\mu_4}\rtimes C_3$&$48$&$\frac{1}{12}$&$4$&$3$&$\Ff_4:x^4+y^4+z^4$\\\hl
$\wt{3,3,5}$&$\frac{\mu_5^3}{\mu_5}\rtimes C_3$&$75$&$\frac{2}{15}$&$10$&$6$&$\Ff_5:x^5+y^5+z^z$\\\hl
$\wt{3,3,6}$&$\frac{\mu_6^3}{\mu_6}\rtimes C_3$&$108$&$\frac{1}{6}$&$18$&$10$&$\Ff_6:x^6+y^6+z^6$\\\hl
$\wt{3,4,4}$&$\frac{\mu_3^4}{\mu_3}\rtimes C_4$&$108$&$\frac{1}{6}$&$18$&$10$&$\YY\tw{3,3,3,3}$\\\hl
$\wt{3,4,5}$&$\frac{\mu_2^{30}}{\mu_2}\rtimes A_5$&$2^{29}\cdot60$&$\frac{13}{60}$&$13\cdot2^{29}$&$13\cdot2^{28}+1$&$\YY\tw{2^{[30]}}$\\\hl
$\wt{4,4,4}$&$\frac{\mu_4^3}{\mu_4}$&$16$&$\frac{1}{4}$&$4$&$3$&$\Ff_4:x^4+y^4+z^4$\\\hl
 \end{tabular}\label{tab:arnold}
 \end{table}
 \normalsize

\bibliographystyle{amsplain}

\end{document}